\documentclass[reqno]{amsart}
\usepackage{amssymb}
\usepackage{amscd}

\newtheorem{theorem}{Theorem}[section]
\newtheorem{corollary}[theorem]{Corollary}

\newtheorem{proposition}[theorem]{Proposition}

\newtheorem{remark}{Remark}[section]



\theoremstyle{definition}
\theoremstyle{remark}
\numberwithin{equation}{section}

\theoremstyle{plain}

\begin{document}
\title[BT-Theorem and automatic continuity]{Quantitative BT-Theorem and
automatic continuity for standard von Neumann algebras}
\author{Francesco Fidaleo}
\address{Department of Mathematics, University of Rome ``Tor Vergata'',
Via della Ricerca Scientifica, 00133 Rome, Italy}
\email{fidaleo@axp.mat.uniroma2.it}
\author{L\'aszl\'o Zsid\'o}
\address{Department of Mathematics, University of Rome ``Tor Vergata'',
Via della Ricerca Scientifica, 00133 Rome, Italy}
\email{zsido@axp.mat.uniroma2.it}
\date{May 12, 2015}
\subjclass{Primary 46L10, Secondary 47A05}
\renewcommand{\subjclassname}{\textup{2000} 
 Mathematics Subject Classification}
\keywords{von Neumann algebra; modular theory; BT-Theorem; intertwining operator;
automatic continuity}
\thanks{The authors were supported by MIUR, INDAM and EU}
\dedicatory{Dedicated to Professor G. A. Elliott on his
$\, 70^{\text{th}}$ birthday}

\begin{abstract}
We prove a general criterion for a von Neumann algebra $M$ in order to be in standard form.
It is formulated in terms of an everywhere defined, invertible, antilinear, a priori not
necessarily bounded operator, intertwining $M$ with its commutant $M'$ and acting
as the $*$-operation on the centre. We also prove a generalized version of the BT-Theorem
which enables us to see that such an intertwiner must be necessarily bounded.
It is shown that this extension of the BT-Theorem leads to the automatic boundedness of
quite general operators which intertwine the identity map of a von Neumann algebra
with a general bounded, real linear, operator valued map. We apply the last result to the
automatic boundedness of linear operators implementing algebraic morphisms of a
von Neumann algebra onto some Banach algebra, and to the structure of a $W^*$-algebra
$M$ endowed with a normal, semi-finite,
faithful weight $\varphi\,$, whose left ideal $\mathfrak N_{\varphi}$ admits an algebraic complement in the GNS representation space $H_{\varphi}\,$, invariant under the
canonical action of $M$.

\end{abstract}
\maketitle

\section{Introduction}

A von Neumann algebra $M$ on a Hilbert space $H$ is usually called {\it standard}
if there exists a bijective isometrical antilinear involution (called {\it conjugation})
$J : H\longrightarrow H$ such that the mapping $x\longmapsto Jx^*J$ is a
$*$-anti-isomorphism of $M$ onto its commutant $M'$, acting as the $*$-operation on
the centre $Z(M)$ of $M$. In other words, $J$ should satisfy the conditions
\smallskip

\centerline{$JMJ=M'\text{ and }JzJ=z^*,\qquad z\in Z(M)\, .$}
\smallskip

\noindent Any two $*$-isomorphic standard von Neumann algebras are spatially
isomorphic (see e.g. \cite{Dix}, Chapitre III, \S 1, Th\'eor\`eme 6 or \cite{S-Z}, Corollary 10.15).

On the other hand, the Tomita-Takesaki Theory (\cite{T1}) enabled
the construction of a standard representation of every von Neumann algebra. Indeed,
each von Neumann algebra $M$ has a normal semi-finite faithful (n.s.f. for short) weight
$\varphi$, and then the associated GNS representation
$\pi_\varphi : M\longrightarrow B(H_\varphi )$ yields a $*$-isomorphism of $M$ onto the
von Neumann algebra $\pi_\varphi (M)\,$, which is standard because the modular
conjugation $J_\varphi$ corresponding to $\varphi$ satisfies
\smallskip

\centerline{$J_\varphi \pi_\varphi (M)J_\varphi =\pi_\varphi (M)'\text{ and }J_\varphi zJ_\varphi
=z^*,\qquad z\in Z\big(\pi_\varphi (M)\big)$}
\smallskip

\noindent (see e.g. \cite{S-Z}, 10.14).

We recall that a $\sigma$-finite von Neumann algebra is standard if and only if
it has a cyclic and separating vector (see e.g. \cite{S-Z}, 10.6, Corollary 1 in 10.13
and Theorem 10.25).

In conclusion, every von Neumann algebra has a standard representation which
is unique up to spatial isomorphism. A more refined classification of the possible
standard representations was elaborated by H. Araki (\cite{A}) and A. Connes (\cite{Co2})
for von Neumann algebras having a cyclic and separating vector, and by U. Haagerup
(\cite{Haa1}) in the general case.

We shall prove a general criterion of standardness, namely that a von Neumann algebra
$M$ on a Hilbert space is standard whenever there exists a bijective antilinear operator
$T : H\longrightarrow H$ such that
\smallskip

\centerline{$TMT^{-1}=M'\text{ and }\;\! TzT^{-1}=z^*,z\in Z(M)$}
\smallskip

\noindent (Theorem \ref{standardness}). This theorem will be used in a forthcoming paper
on tensor products of von Neumann algebras over von Neumann subalgebras.

Furthermore, we shall prove that the above antilinear operator $T$, not a priori assumed
to be bounded, is necessarily bounded (Proposition \ref{boundedness}). This boundedness
result, which can be proved by using a result of E. L. Griffin (see \cite{Gr2}, Theorems 1 and 2),
arises also as particular case of a general automatic continuity theorem
(Theorem \ref{commutant}), whose proof cannot be carried out by applying the results of
Griffin. Its proof is based on a generalization of the classical BT-Theorem (Theorem \ref{BT}),
which could be of interest also elsewhere as it is shown in Section \ref{applx} containing
some useful applications.

We shall use the terminology of \cite{S-Z}. In particular,
\begin{itemize}
\item $(\, \cdot\, |\, \cdot\, )$ will denote the inner product of a Hilbert space and it will
be assumed linear in the first variable and antilinear in the second variable;
\item $B(H)$ will denote the algebra of all bounded linear operators on the Hilbert space $H$,
with the identity simply denoted by $1$;
\item $Z(M)$ will denote the centre of a von Neumann algebra $M\subset B(H)\,$;
\item $l(x)$ and $r(x)$ will stay for the left and right support-projection of an operator $x$
in some von Neumann algebra $M\subset B(H)\,$, where $l(x)=r(x)=:s(x)$ if $x$ is normal,
$z(x)$ for the central support projection of $x\in M$, and $s(\varphi )$ for the support projection
of a normal positive linear functional $\varphi$ on $M$.
\end{itemize}

\medskip
\section{A general criterion for standardness of von Neumann algebras}
\bigskip

We say that a projection $e$ in a von Neumann algebra $M$ is
{\it piecewise $\sigma$-finite} if there exists a family $\big( p_\iota\big)_{\iota\in I}$ of
mutually orthogonal central projections of $M$ such that $\displaystyle \sum\limits_{\iota\in I} p_\iota =1$
and all projections $e p_\iota$ are $\sigma$-finite. If the unit of $M$ is piecewise
$\sigma$-finite, then we call the von Neumann algebra $M$ piecewise $\sigma$-finite.

Let us first consider the characterization of the standardness of piecewise $\sigma$-finite
von Neumann algebras.

\begin{proposition}\label{piecewise-sigma-finite}
Let $M\subset B(H)$ be a piecewise $\sigma$-finite von Neumann algebra.
If there is a bijective antilinear operator $T : H\longrightarrow H$ such that
\medskip

\centerline{$TMT^{-1}=M'\text{ and }\;\! TzT^{-1}=z^*,\qquad z\in Z(M)\, ,$}
\smallskip

\noindent then $M$ is a standard von Neumann algebra.
\end{proposition}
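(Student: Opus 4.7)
The natural strategy is a two-stage reduction: first use the piecewise central decomposition $1=\sum_\iota p_\iota$ to reduce to the $\sigma$-finite case, and then invoke the classical criterion recalled in the introduction that a $\sigma$-finite von Neumann algebra is standard iff it has a cyclic and separating vector on its Hilbert space.

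For the reduction, observe that every central projection is self-adjoint, so the hypothesis $TzT^{-1}=z^*$ on $Z(M)$ yields $Tp_\iota T^{-1}=p_\iota$, whence $Tp_\iota=p_\iota T$ as antilinear operators on $H$. Consequently $T$ splits as a direct sum $T=\bigoplus_\iota T_\iota$ with each $T_\iota:p_\iota H\to p_\iota H$ bijective antilinear, and each pair $(Mp_\iota,T_\iota)$ satisfies the hypothesis of the proposition for the $\sigma$-finite algebra $Mp_\iota\subset B(p_\iota H)$. Granting the $\sigma$-finite case and denoting by $J_\iota$ the resulting conjugation of $Mp_\iota$, the orthogonal sum $J:=\bigoplus_\iota J_\iota$ is an antilinear isometric involution on $H$, and the identities $JMJ=M'$ and $JzJ=z^*$ on $Z(M)$ descend from the corresponding identities on each central summand.

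The $\sigma$-finite case is the heart of the argument. One must extract, from $T$, a cyclic and separating vector $\Omega\in H$; the Tomita--Takesaki modular conjugation $J_\Omega$ will then automatically satisfy $J_\Omega MJ_\Omega=M'$ and $J_\Omega zJ_\Omega=z^*$ for $z\in Z(M)$ (the latter because the modular group is trivial on the centre). The main obstacle is that $T$ is not assumed bounded, so density and closure cannot be transported along $T$: a cyclic vector for $M'$ pushed through $T$ need not remain cyclic for $M$. The argument must therefore exploit $T$ purely algebraically, through the antilinear algebra isomorphism $\Phi(x):=TxT^{-1}$ of $M$ onto $M'$, which restricts to the $*$-operation on $Z(M)$, preserves orthogonality of idempotents, and fixes every central projection. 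A natural route is a Zorn exhaustion argument: iteratively enlarge a vector by orthogonal increments whose $M$- and $M'$-supports have matching central cover, saturating, by $\sigma$-finiteness of $M$, to a vector with both $M$-support and $M'$-support equal to $1$, which is then the desired $\Omega$.
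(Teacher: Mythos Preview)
Your reduction to the $\sigma$-finite case is correct and matches the paper's: since $Tp_\iota T^{-1}=p_\iota^*=p_\iota$ for central projections, $T$ restricts to each $p_\iota H$, and a direct sum of standard algebras is standard.

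The gap is in the $\sigma$-finite case. Your Zorn exhaustion is not a plan yet: you never say what the inductive step is, nor how $\Phi$ enters it. Phrases like ``orthogonal increments whose $M$- and $M'$-supports have matching central cover'' do not specify a mechanism, and since $\Phi$ is only a multiplicative antilinear bijection (not $*$-preserving), it sends projections to idempotents, so it is unclear how it interacts with supports and central covers at all. As written there is no argument.

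More importantly, your diagnosis of the obstacle is slightly off, and this leads you to abandon $T$ prematurely. You are right that pushing a cyclic vector for $M'$ through $T^{-1}$ need not yield a cyclic vector for $M$, because $T^{-1}(M'\xi)=MT^{-1}\xi$ and $T^{-1}$ of a dense set need not be dense. But the paper observes that the transport \emph{cyclic $\to$ separating} (and \emph{separating $\to$ cyclic}) works purely algebraically, with no boundedness needed. Concretely: if $\xi$ is cyclic for $M$ on some central summand, and $xT^{-1}\xi=0$ for $x\in M$, then $TxT^{-1}\in M'$ kills $y\xi$ for every $y\in M$, hence vanishes by cyclicity, so $x=0$; thus $T^{-1}\xi$ is separating. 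Dually, if $\eta$ is separating for $M$, the cyclic projection $e'\in M'$ onto $\overline{MT\eta}$ satisfies $T^{-1}e'T\,\eta=\eta$, and separatingness forces $T^{-1}e'T=1$, so $T\eta$ is cyclic.

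The paper then combines this with two classical facts you did not invoke: the structural lemma that a $\sigma$-finite $M$ admits a central projection $p$ with $M_p$ having a cyclic vector and $M_{1-p}$ a separating vector, and the Dixmier--Mar\'echal theorem (existence of a cyclic vector plus existence of a separating vector implies existence of a joint one). On each central piece the algebraic transport above supplies the missing half, and Dixmier--Mar\'echal finishes. This is the concrete route your plan is missing.
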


\proof
Since $T$ is commuting with the central projections of $M$ and direct sums of standard
von Neumann algebras are standard, we may assume without loss of generality that
$M$ is $\sigma$-finite.

According to Lemma 7.18 of \cite{S-Z}, there exists a projection $p\in Z(M)$ such that
the reduced/induced algebra $M_{p} =\big\{ x\mid pH : pH\longrightarrow pH\, ; x\in M
\big\} \subset B(pH)$ has a cyclic vector $\xi\in pH\,$, while $M_{1-p}\subset B((1-p)H)$
has a separating vector $\eta\in(1-p)H$.

Then $T^{-1}\xi\in pH$ is separating for $M_{p}\,$.
Namely, suppose that $x\;\! T^{-1}\xi =0$ for some $x\in pM\,$. Then
\smallskip

\centerline{$y\;\! TxT^{-1}\xi =0\, ,\qquad y\in pM\, .$}
\medskip

\noindent This means
\smallskip

\centerline{$0=y\;\! TxT^{-1}\xi =TxT^{-1}y\;\! \xi\, ,\qquad y\in pM,$}
\medskip

\noindent and by the cyclicity of $\xi$ for $M_p$ we deduce that
\medskip

\centerline{$TxT^{-1} = TxpT^{-1}=TxT^{-1}p=0\, .$}
\medskip

\noindent Thus, $x=0\,$.

Using now the Dixmier--Mar\'echal Theorem (see \cite{Dix-Ma}, Corollaire 1),
we infer that $pH$ contains a vector which is cyclic and separating for
$M_p\,$.

Similarly, the vector $T\eta\in (1-p)H$ is cyclic for $M_{1-p}\,$.
Indeed, the orthogonal projection $e'$ onto the closure of $(1-p)MT\eta =MT(1-p)\eta =
MT\eta$ belongs to $M'$ and $e'\leq 1-p\,$. Furthermore, $e'T\eta =T\eta$ and so
$T^{-1}e'T\eta =\eta\,$, where $T^{-1}e'T\in (1-p)M$. Since $\eta$ is separating for
$M_{1-p}\,$, we get that $T^{-1}e'T =1-p\,$, hence $e'=1-p\,$. In other words
$(1-p)MT\eta$ is dense in $(1-p)H$.

Applying again the Dixmier--Mar\'echal Theorem, we infer that $(1-p)H$
contains a vector which is cyclic and separating for $M_{1-p}\,$.

We conclude that $M$ has a cyclic and separating vector and therefore it
is a standard von Neumann algebra.

\hfill $\square\quad$
\medskip

Since finite von Neumann algebras are piecewise $\sigma$-finite (see e.g.
\cite{S-Z}, Lemma 7.2), Proposition \ref{piecewise-sigma-finite} implies
immediately :

\begin{corollary}\label{finite}
Let $M\subset B(H)$ be a finite von Neumann algebra such that for an appropriate
bijective antilinear operator $T : H\longrightarrow H$ we have
\medskip

\centerline{$TMT^{-1}=M'\text{ and }\;\! TzT^{-1}=z^*,\qquad z\in Z(M)\, .$}
\smallskip

\noindent Then $M$ is a standard von Neumann algebra.
\end{corollary}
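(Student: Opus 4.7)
The plan is immediate: the hypotheses on $T$ in this corollary are word-for-word those of Proposition \ref{piecewise-sigma-finite}, so the only task is to verify that every finite von Neumann algebra is piecewise $\sigma$-finite, and then invoke that proposition.

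To establish piecewise $\sigma$-finiteness of a finite $M$, I would exploit the existence of a faithful normal center-valued trace $E : M \longrightarrow Z(M)$. From this one deduces that for any central projection $p$, the reduced algebra $M_p$ is $\sigma$-finite if and only if $Z(M)p$ is $\sigma$-finite: indeed, composing a faithful normal state on $Z(M)p$ with $E$ produces a faithful normal state on $M_p\,$, while the converse direction is trivial. The problem thereby reduces to decomposing the commutative algebra $Z(M)$ into a sum of mutually orthogonal $\sigma$-finite projections.

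For this, I would apply Zorn's lemma inside $Z(M)$ to extract a maximal family $\big( p_\iota\big)_{\iota \in I}$ of mutually orthogonal $\sigma$-finite central projections. Setting $p = \sum_{\iota} p_\iota\,$, I would argue $p = 1$ by contradiction: if $1-p \ne 0$, then any nonzero normal state on the abelian algebra $Z(M)(1-p)$ has a $\sigma$-finite central support projection dominated by $1-p$, contradicting maximality. This decomposition witnesses the piecewise $\sigma$-finiteness of $M$.

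With this in hand, Proposition \ref{piecewise-sigma-finite} applies directly and yields that $M$ is standard. I do not anticipate any serious obstacle: the only content beyond the preceding proposition is the classical fact that finiteness entails piecewise $\sigma$-finiteness (cf.\ \cite{S-Z}, Lemma 7.2), so the corollary follows at once.
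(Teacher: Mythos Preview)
Your proposal is correct and follows exactly the paper's approach: reduce to Proposition~\ref{piecewise-sigma-finite} by invoking the fact that finite von Neumann algebras are piecewise $\sigma$-finite. The paper simply cites \cite{S-Z}, Lemma~7.2 for this fact, whereas you additionally sketch its proof via the center-valued trace and a Zorn argument on $Z(M)$; this extra detail is fine and the argument is sound.
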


\hfill $\square\quad$
\medskip

Now we are going to characterize the standard properly infinite von Neumann algebras.

We recall that, for a given infinite cardinal $\gamma\,$, a properly infinite von Neumann algebra
$M$ is called {\it uniform of type} $\gamma$ if there exists a family $(e_\iota )_{\iota\in I}$
of equivalent, mutually orthogonal, piecewise $\sigma$-finite projections in $M$ such that
$\displaystyle \sum\limits_{\iota\in I} e_\iota =1$ and the cardinality of $I$ is $\gamma\,$.
Every properly infinite von Neumann algebra $M$ has a unique decomposition
in uniform components: there exists a set $\Gamma$ of distinct cardinals and a
family $(p_\gamma )_{\gamma\in\Gamma}$ of non-zero central projections, uniquely
determined by the conditions
\medskip

\centerline{$\displaystyle \sum\limits_{\gamma\in\Gamma} p_\gamma =
1\, ,\qquad M_{p_\gamma}\text{is uniform of type }\gamma\text{ for every }\gamma
\in\Gamma$}

\noindent (see \cite{S-Z}, proposition 8.5).

\begin{proposition}\label{prop.inf}
Let $M\subset B(H)$ be a properly infinite von Neumann algebra. If there exists
a multiplicative antilinear isomorphism $\theta : M\longrightarrow M'$, which acts
on the centre of $M$ as the $*$-operation, then $M$ is a standard von Neumann algebra.
\end{proposition}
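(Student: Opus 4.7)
The plan is to reduce to Proposition~\ref{piecewise-sigma-finite} by means of the uniform-type decomposition of properly infinite von Neumann algebras, followed by a spatial implementation of $\theta$ on a piecewise $\sigma$-finite reduced subalgebra. First, since $\theta$ acts as the $*$-operation on $Z(M)$, every central projection is fixed by $\theta$, so in particular the central projections $p_\gamma$ of the uniform decomposition are $\theta$-invariant; $\theta$ therefore restricts to a multiplicative antilinear isomorphism of each $M_{p_\gamma}$ onto $(M_{p_\gamma})'$ acting as $*$ on the corresponding centre. Since direct sums of standard von Neumann algebras are standard, I may assume that $M$ is uniform of some infinite type $\gamma$. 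Next, pick a family $(e_\iota)_{\iota\in I}$ of mutually orthogonal, equivalent, piecewise $\sigma$-finite projections summing to $1$ with $|I|=\gamma$, and fix $e := e_{\iota_0}$. The reduced algebra $eMe$ on $eH$ is piecewise $\sigma$-finite, its commutant in $B(eH)$ is canonically identified with $M'|_{eH}$ (as $e$ has central support $1$), and the restriction of $\theta$ yields a multiplicative antilinear isomorphism $eMe \to fM'f$, where $f := \theta(e)$ is an idempotent of central support $1$ in $M'$, though a priori not self-adjoint.

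The crucial step is to manufacture from this abstract data a bijective antilinear operator $T : eH \to eH$ intertwining $eMe$ with its commutant in $B(eH)$ and acting as the $*$-operation on $Z(eMe)=Z(M)$. I would build $T$ by combining $\theta$ with the partial isometries $v_\iota \in M$ implementing $e_\iota \sim e$, with the range projection $p := r(f) \in M'$ of the idempotent $f$, and with partial isometries in $M'$ produced from the images $\theta(v_\iota)$; proper infiniteness of $M$ and the abundance of equivalences in the uniform family provide the flexibility needed to assemble these pieces coherently. Once such a $T$ is in hand, Proposition~\ref{piecewise-sigma-finite} applies to $eMe$ on $eH$ and yields the standardness of $eMe$.

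Finally, standardness is lifted from $eMe$ to $M$ through the matrix-unit identifications $M \cong eMe \otimes B(\ell^2(I))$ and $H \cong eH \otimes \ell^2(I)$ supplied by the partial isometries $v_\iota$: the infinite cardinality $|I|=\gamma$ ensures that the amplification by $B(\ell^2(I))$ is absorbed, so the standard form of $eMe \otimes B(\ell^2(I))$ obtained by tensoring standard forms is unitarily equivalent to the given representation of $M$ on $H$. The main obstacle is the spatial implementation in the middle step: since $\theta$ is only assumed to be multiplicative, antilinear and bijective, and is not a priori bounded, $*$-preserving or continuous, the idempotent $f=\theta(e)$ must be handled through its range projection rather than directly, and $T$ has to be assembled so that the purely algebraic multiplicative structure of $\theta$ translates into a genuine bounded intertwining between $eMe$ and its commutant on $eH$.
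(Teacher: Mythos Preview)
Your proposal has a genuine gap at precisely the point you identify as the ``crucial step'': you never actually construct the bijective antilinear operator $T:eH\to eH$ intertwining $eMe$ with its commutant. You write that you ``would build $T$ by combining $\theta$ with the partial isometries $v_\iota$'' and with partial isometries in $M'$ coming from the $\theta(v_\iota)$, but $\theta$ is an abstract algebra map $M\to M'$, not a spatial one, and nothing in your outline produces vectors in $eH$ from vectors in $eH$. Since $\theta$ is not assumed $*$-preserving, the $\theta(v_\iota)$ are not partial isometries and $f=\theta(e)$ is only an idempotent; passing to the range projection of $f$ does not by itself give you a Hilbert-space map. Getting from an algebraic isomorphism to a spatial implementation is exactly the hard content here, and it is not something that can be assembled by hand from matrix units.

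The paper's proof confronts this head-on and takes a different route. It first applies Okayasu's theorem to replace $\theta$ by a genuine $*$-anti-isomorphism $\theta_o:x\mapsto a'^{-1}\theta(x)a'$; this is what makes the projection and equivalence structure transportable. One then reduces to the uniform type~$\gamma$ case and uses $\theta_o$ to conclude that $M'$ is \emph{also} uniform of type~$\gamma$, and likewise for $\pi_\varphi(M)'$. Griffin's implementation theorem (\cite{Gr1}, cf.\ \cite{S-Z}, Theorem 8.6) then gives directly that $\pi_\varphi$ is spatial, hence $M$ is standard. Your final step is also not justified as written: from ``$eMe$ is standard on $eH$'' together with $M\cong eMe\otimes B(\ell^2(I))$ on $H\cong eH\otimes\ell^2(I)$ one gets $M'\cong (eMe)'\otimes\mathbb C 1$, and this is \emph{not} the commutant in the standard representation of $eMe\otimes B(\ell^2(I))$ unless one already knows something about the size of $M'$ --- precisely the information that the paper extracts from $\theta_o$.
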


\proof
First we reduce the proof to the case when $\theta$ is additionally a $*$-map.

Let $\varphi$ be some n.s.f. weight on $M$, $\pi_\varphi : M
\longrightarrow B(H_\varphi )$ the associated GNS representation, and $J_\varphi$
the corresponding modular conjugation. Then $\theta_\varphi : \pi_\varphi (M)\ni
\pi_\varphi (x)\longmapsto J_\varphi \pi_\varphi (x) J_\varphi\in  \pi_\varphi (M)'$
is a multiplicative antilinear isomorphism commuting with the $*$-operation.

Since $\theta_\varphi\circ\pi_\varphi\circ\theta^{-1} : M'\longrightarrow
\pi_\varphi (M)'$ is an algebra isomorphism, by Theorem I in \cite{O} there
exists an invertible $0\leq a'\in M'$ such that
\medskip

\centerline{$M'\ni x'\longmapsto (\theta_\varphi\circ\pi_\varphi\circ\theta^{-1})
(a' x' a'^{-1})\in \pi_\varphi (M)'$}
\smallskip

\noindent is a $*$-isomorphism. Consequently its composition with $(\theta_\varphi
\circ\pi_\varphi )^{-1}$, that is
\medskip

\centerline{$M'\ni x'\longmapsto \theta^{-1}(a' x' a'^{-1})\in M,$}
\smallskip

\noindent as well as the inverse map
\smallskip

\centerline{$\theta_o : M\ni x\longmapsto a'^{-1}\theta (x) a'\in M'$}
\smallskip

\noindent of this, are multiplicative antilinear isomorphisms commuting with the
$*$-operation.

Thus $\theta_o$ is completely additive, positive, and preserves
Murray-von Neumann equivalence of projections.
Moreover, $\theta_o$ acts on the centre of $M$ as the $*$-operation.

Taking now in account the decomposability of $M$ in uniform components, as
well as the fact that direct sums of standard von Neumann algebras are standard,
we can assume in the sequel without loss of generality that $M$ is uniform of
type $\gamma$ for some infinite cardinal $\gamma\,$.

Since $M'=\theta_o(M)\,$, the commutant $M'$ is uniform of type $\gamma\,$.
On the other hand, taking into account that
\smallskip

\centerline{$M\ni x\longmapsto J_\varphi\pi_\varphi (x)J_\varphi \in\pi_\varphi (M)'$}
\smallskip

\noindent is a multiplicative antilinear isomorphism commuting with the $*$-operation,
also the commutant $\pi_\varphi (M)'$ is uniform of type $\gamma\,$.

Using now a classical implementation theorem (essentially Theorem 2 of \cite{Gr1}, cf.
\cite{S-Z}, Theorem 8.6), we conclude that the $*$-isomorphism
$\pi_\varphi : M\longrightarrow \pi_\varphi (M)$ of the von Neumann algebras $M$
and $\pi_\varphi (M)\,$, whose commutants are uniform of the same type $\gamma\,$,
is spatial. Since any von Neumann algebra which is spatially isomorphic to a standard
von Neumann algebra is still standard, we infer that $M$ is a standard von Neumann
algebra.

\hfill $\square\quad$
\medskip

A direct consequence of Corollary \ref{finite} and Proposition \ref{prop.inf} is the
following general standardness criterion :

\begin{theorem}\label{standardness}
{\rm (General criterion for standardness of von Neumann algebras)}
Let $M$ be a von Neumann algebra on a Hilbert space $H$. $M$ is acting in standard form if and only if
there is a bijective
antilinear operator $T : H\longrightarrow H$ such that
\medskip

\centerline{$TMT^{-1}=M'\text{ and }\;\! TzT^{-1}=z^*,\qquad z\in Z(M)\, .$}

\end{theorem}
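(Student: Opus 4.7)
The \emph{only if} direction is trivial: if $M$ is standard with conjugation $J\,$, then $T:=J$ is an antilinear bijection of $H$ (with $T^{-1}=J$) satisfying both required identities. The content lies in the converse, for which my plan is to split $M$ into its finite and properly infinite summands and invoke, respectively, Corollary \ref{finite} and Proposition \ref{prop.inf}.

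Let $p\in Z(M)$ be the unique central projection with $M_p$ finite and $M_{1-p}$ properly infinite. Since $p=p^*\,$, the centre hypothesis gives $TpT^{-1}=p\,$, i.e.\ $Tp=pT\,$. Hence $T$ restricts to bijective antilinear operators $T_1 : pH\to pH$ and $T_2 : (1-p)H\to (1-p)H\,$, and a routine computation using $TMT^{-1}=M'$ together with the identification $(M_p)'=(M')_p$ (in $B(pH)$, since $p$ is central) yields $T_1 M_p T_1^{-1}=(M_p)'$ and $T_1 z T_1^{-1}=z^*$ for every $z\in Z(M_p)\,$; analogously for $T_2$ acting on $M_{1-p}\,$. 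Thus Corollary \ref{finite} applied to the pair $(M_p,T_1)$ delivers the standardness of $M_p\,$.

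For the properly infinite piece I define $\theta : M_{1-p}\longrightarrow (M_{1-p})'$ by $\theta(x):=T_2\;\! x\;\! T_2^{-1}\,$. Since the composition of two antilinear maps is linear, each $\theta(x)$ is a linear operator, automatically bounded as an element of $(M_{1-p})'\,$; at the algebra level $\theta$ is plainly multiplicative and bijective, and the antilinearity of $T_2$ gives at once $\theta(\lambda\;\! x)=\overline{\lambda}\;\!\theta(x)\,$, so $\theta$ is antilinear. Its action on the centre is the $*$-operation by hypothesis. Proposition \ref{prop.inf} therefore applies and yields the standardness of $M_{1-p}\,$; since direct sums of standard von Neumann algebras are standard (conjugations add componentwise), $M$ itself is standard. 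No genuine obstacle arises here: the theorem is essentially a packaging of the two preceding structural results along the finite/properly-infinite splitting, and the only point worth flagging is the elementary verification that conjugation by an antilinear bijection yields an antilinear, linear-operator-valued map at the algebra level.
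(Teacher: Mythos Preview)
Your proof is correct and follows precisely the route the paper intends: the finite/properly-infinite central decomposition, together with the observation that $T$ commutes with the splitting projection, reduces the statement to Corollary~\ref{finite} and Proposition~\ref{prop.inf}, which is exactly how the paper presents it (as an immediate consequence of those two results). The details you spell out---that $TpT^{-1}=p$, that $\theta(x)=T_2xT_2^{-1}$ is a bounded linear operator by hypothesis and that $\theta$ is an antilinear multiplicative bijection acting as $*$ on the centre---are the natural ones and match the paper's implicit argument.
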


\hfill $\square\quad$
\medskip

\section{Linear operators commuting with a von Neumann algebra}
\bigskip

As it is shown in the Appendix, an everywhere defined, bijective, antilinear operator
on a Hilbert space, even an involutive one, might be unbounded. Nevertheless, as we
shall show in this section, this is not the case for an operator $T$ satisfying the
assumptions in Theorem \ref{standardness}.

A first proof will be based on the result of E. L. Griffin (see \cite{Gr2}, Theorems 1 and 2)
reported below:

\begin{theorem}\label{Griffin}
Let $M$ be a von Neumann algebra on a Hilbert space $H$. In order that every linear operator
$T : H\longrightarrow H$ satisfying
\smallskip

\centerline{$\quad\quad Tx=x\;\! T\, ,\qquad x\in M$}
\smallskip

\noindent be bounded, it is necessary and sufficient that no minimal projection $p$ of
$Z(M)$ exists with $pM$ finite-dimensional and $pM'$ infinite-dimensional.
\end{theorem}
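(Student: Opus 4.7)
Suppose such a minimal $p \in Z(M)$ exists. Then $pM$ is a finite-dimensional factor on $pH$ (of type $I_n$ for some $n<\infty$), producing a tensor decomposition $pH\cong\mathbb{C}^n\otimes K$ in which $pM$ corresponds to $\mathbb{M}_n(\mathbb{C})\otimes\mathbb{C}$ and $pM'$ to $\mathbb{C}\otimes B(K)$, with $\dim K=\infty$. The plan is to pick any everywhere defined but unbounded linear $T_0:K\to K$ (constructed with the help of a Hamel basis of $K$) and to set $T := (\mathrm{id}\otimes T_0)\oplus 0$ relative to $H = pH\oplus (1-p)H$. A direct check gives $Tx=xT$ for every $x\in M$, producing the desired counter-example.

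\textbf{Sufficiency.} Assume the hypothesis and let $T:H\to H$ be linear with $Tx=xT$ for all $x\in M$. Since $T$ commutes with every central projection of $M$, it respects each central decomposition of $H$. First I would split $1 = p_{\mathrm{at}}+p_{\mathrm{dif}}$, with $p_{\mathrm{at}}$ the sum of the minimal central projections and $p_{\mathrm{dif}}$ its complement. On $p_{\mathrm{at}}H=\bigoplus_\iota p_\iota H$, each $p_\iota M$ is a factor and, by hypothesis, is either infinite-dimensional or has finite-dimensional commutant; in the latter case $p_\iota H$ itself is finite-dimensional and $T|_{p_\iota H}$ is automatically bounded. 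In the former case, after reducing to a $\sigma$-finite slice I would pick a cyclic and separating vector $\xi_\iota$ for $p_\iota M$ and exploit the identity $T(x\xi_\iota)=x(T\xi_\iota)$ together with the separation property to present $T|_{p_\iota M\cdot\xi_\iota}$ as (the restriction of) a closed operator $R_{\eta_\iota}$ affiliated with $p_\iota M'$, where $\eta_\iota := T\xi_\iota$. A $\sigma$-finite exhaustion of $p_{\mathrm{dif}}H$ plus modular theory reduces the diffuse part to the same cyclic-separating setting.

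\textbf{Main obstacle.} The hard core of the proof lies in two sub-claims for each cyclic-separating piece: (a) the affiliated operator $R_{\eta_\iota}$ is actually bounded, hence lies in $p_\iota M'$, and (b) the given $T$ coincides with $R_{\eta_\iota}$ on all of the piece, not merely on the dense subspace $p_\iota M\cdot\xi_\iota$. Claim (a) is where the standing hypothesis plays an indispensable role: the type $I_n$-factor-with-infinite-multiplicity summands excluded by the hypothesis are precisely those admitting genuinely unbounded left-multiplication affiliated operators, and ruling them out forces $R_{\eta_\iota}$ to be bounded. Claim (b) I would derive from a second use of the commutation $Tx=xT$ combined with the richness of both $M\xi_\iota$ and $M'\xi_\iota$ in standard form, together with a closed-graph type argument on $T - R_{\eta_\iota}$. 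Once (a) and (b) are in place, an application of the uniform boundedness principle across the central decomposition, legitimate because $T$ is defined on all of the Hilbert direct sum, yields the global boundedness of $T$.
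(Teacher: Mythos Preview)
The paper does not prove this theorem at all: it is quoted as a result of E.~L.~Griffin with a reference to \cite{Gr2}, Theorems~1 and~2, and the proof box is empty. So there is no ``paper's own proof'' to compare against; your proposal must stand on its own.

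Your necessity argument is correct and standard.

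Your sufficiency sketch, however, has genuine gaps at exactly the points you flag as the ``main obstacle''. First, the reduction to a cyclic and separating vector is not always available: an infinite-dimensional factor summand $p_\iota M$ need not act in standard form (take $p_\iota M = B(p_\iota H)$ with $\dim p_\iota H=\infty$, whose commutant is $\mathbb{C}$ and which has no separating vector). You would need to handle such pieces separately, and more generally your ``$\sigma$-finite slice'' plus ``modular theory'' reductions for the diffuse part are doing a lot of unexplained work.

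Second, and more seriously, Claim~(a) is asserted rather than argued. The summands excluded by the hypothesis are \emph{not} ``precisely those admitting genuinely unbounded affiliated operators'': if $p_\iota M$ is a type~$II_1$ factor in standard form, the hypothesis is satisfied, yet $p_\iota M'$ (also type~$II_1$) certainly has unbounded closed affiliated operators, so there is no structural reason for $R_{\eta_\iota}$ to be bounded. What actually forces boundedness is that $T$ is \emph{everywhere defined}, and turning that into boundedness of $R_{\eta_\iota}$ is precisely the content of the theorem---you cannot just invoke the hypothesis here. Likewise, Claim~(b)'s ``closed-graph type argument on $T-R_{\eta_\iota}$'' is circular: you have no a~priori continuity or closability for $T$, so the closed graph theorem is unavailable.

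If you want a self-contained route, look at the alternative proof the paper develops in Theorem~\ref{commutant} and Corollaries~\ref{BT-convergence}--\ref{commutant-cases}: the quantitative BT-Theorem lifts a fast-converging vector sequence $\xi_k\to 0$ to operators $b_k\in M$ with $\|b_k\|\to 0$ and $\xi_k=b_k\eta$, whence $T\xi_k=b_k T\eta\to 0$, giving continuity directly without ever invoking affiliated operators or closed-graph reasoning.
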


\hfill $\square\quad$
\medskip

From Theorem \ref{Griffin} it follows immediately the next automatic boundedness
result:

\begin{corollary}\label{Griffin-standard}
If $M\subset B(H)$ is a standard von Neumann algebra, then every linear operator
$T : H\longrightarrow H$ satisfying
\medskip

\centerline{$\quad\quad Tx=x\;\! T\, ,\qquad x\in M$}

\noindent is bounded.
\end{corollary}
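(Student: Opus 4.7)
The plan is to deduce the corollary as a direct consequence of Theorem \ref{Griffin}. What needs to be verified is that for a standard von Neumann algebra $M\subset B(H)$, no minimal projection $p\in Z(M)$ can make $pM$ finite-dimensional while $pM'$ is infinite-dimensional.

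To this end, I would fix a conjugation $J:H\longrightarrow H$ realizing the standardness of $M$, so that $JMJ=M'$ and $JzJ=z^*$ for every $z\in Z(M)$. Since any central projection $p\in Z(M)$ is self-adjoint, $JpJ=p^*=p$. Therefore the antilinear $*$-anti-isomorphism
\smallskip

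\centerline{$\Phi : M\ni x\longmapsto JxJ\in M'$}
\smallskip

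\noindent carries $pM$ bijectively onto $pM'$, since $JpxJ=pJxJ$ for all $x\in M$.

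The key observation is now purely linear-algebraic: an antilinear bijection between complex vector spaces preserves the (complex) dimension, because composing with any fixed antilinear involution on the target produces a $\mathbb{C}$-linear bijection. Consequently $\dim_\mathbb{C} pM=\dim_\mathbb{C} pM'$, and in particular $pM$ is finite-dimensional if and only if $pM'$ is. Hence the pathological situation excluded in Theorem \ref{Griffin} simply cannot occur for a standard $M$, and the automatic boundedness follows.

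The argument is short and the only potential pitfall is the dimension-preservation step, but this is straightforward; the serious content sits entirely in the already-invoked Theorem \ref{Griffin}.
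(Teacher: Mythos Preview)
Your proof is correct and follows essentially the same route as the paper's: both reduce to Theorem~\ref{Griffin} and then use the conjugation $J$ (with $JpJ=p$ for central $p$) to obtain an antilinear bijection between $pM$ and $pM'$, forcing these to have the same dimension. The paper phrases this by saying that $M_p$ is again standard, hence $*$-anti-isomorphic to $(M')_p$, while you compute $J(pM)J=pM'$ directly; the content is identical.
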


\proof
By Theorem \ref{Griffin} it is enough to verify that if $p\in Z(M)$ is a projection such that
the reduced/induced von Neumann algebra $M_p$ is finite-dimensional, then also its
commutant $(M')_p$ is finite-dimensional.

But since $M$ is standard and the projection $p$ is central, also $M_p$ is a standard
von Neumann algebra. In particular, $(M')_p$ is $*$-anti-isomorphic to $M_p$ and therefore
it is finite-dimensional.

\hfill $\square\quad$
\medskip

An alternative proof, based on an extended version of the BT-Theorem, will be
presented in Corollary \ref{commutant-cases}.

Now we are ready to prove

\begin{proposition}\label{boundedness}
If $M\subset B(H)$ is a von Neumann algebra and $T : H\longrightarrow H$ is a bijective
antilinear operator such that
\medskip

\centerline{$TMT^{-1}=M'\text{ and }\;\! TzT^{-1}=z^*,\qquad z\in Z(M)\, ,$}
\smallskip

\noindent then $T$ must be bounded.
\end{proposition}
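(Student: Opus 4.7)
The plan is to combine Theorem \ref{standardness}, which already implies that $M$ is in standard form, with Corollary \ref{Griffin-standard}, by factoring $T$ through a linear operator that commutes pointwise with $M$.

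First, I would fix a conjugation $J$ realizing the standard form of $M$, so that $J=J^{-1}$, $JMJ=M'$, and $JzJ=z^*$ for every $z\in Z(M)$. The operator $S:=JT$, being the composition of two antilinear maps, is then a bijective linear operator on $H$. Writing $\alpha(x):=TxT^{-1}\in M'$, a direct computation gives $SxS^{-1}=J\alpha(x)J\in JM'J=M$ for $x\in M$, and $SzS^{-1}=Jz^*J=(JzJ)^*=z$ for $z\in Z(M)$. A routine check (using that antilinear conjugation by $T$ preserves the $*$-structure on $B(H)$, i.e.\ $Tx^*T^{-1}=(TxT^{-1})^*$) shows that the map $\beta:x\mapsto SxS^{-1}$ is a $*$-automorphism of $M$ fixing $Z(M)$ pointwise.

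Next, I would invoke the classical spatial implementation theorem for $*$-automorphisms of a von Neumann algebra in standard form (the same ingredient used in the proof of Proposition \ref{prop.inf}): there is a bounded unitary $U$ on $H$ with $UxU^{-1}=\beta(x)$ for all $x\in M$. Then $W:=U^{-1}S$ is a linear operator on $H$, and a brief computation gives $WxW^{-1}=U^{-1}SxS^{-1}U=U^{-1}\beta(x)U=x$ for every $x\in M$, so that $W$ commutes pointwise with $M$. By Corollary \ref{Griffin-standard} applied to the standard algebra $M$, $W$ is bounded. Consequently $S=UW$ is bounded, and finally $T=J^{-1}S=JS$ is bounded, as desired.

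The only nonelementary step in this program is the appeal to spatial implementation of $\beta$ by a bounded unitary on $H$; this is the classical but nontrivial input beyond Griffin's theorem, available to us as it is to the authors in the proof of Proposition \ref{prop.inf}. Everything else reduces to straightforward verifications.
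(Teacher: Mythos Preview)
Your overall strategy coincides with the paper's, but there is a genuine gap at the point where you assert that $\beta: x \mapsto SxS^{-1}$ is a $*$-automorphism. The justification you offer, namely $Tx^*T^{-1}=(TxT^{-1})^*$ for an antilinear bijection $T$, is false in general: this identity holds for antiunitaries, but for an arbitrary antilinear bijection (even a bounded one) it fails. A simple instance on $\mathbb{C}^2$ is $T=DC$ with $C$ complex conjugation and $D=\mathrm{diag}(1,2)$; taking $x$ to be the matrix unit $e_{12}$ one computes $Tx^*T^{-1}=2\,e_{21}$ while $(TxT^{-1})^*=\tfrac12\,e_{21}$. Thus $\beta$ is a priori only an \emph{algebra} automorphism of $M$, and the spatial implementation theorem for $*$-automorphisms of standard von Neumann algebras does not apply directly.

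The paper's proof follows exactly your outline but inserts the missing step: after obtaining the algebra automorphism $x\mapsto JTx(JT)^{-1}$, it invokes Okayasu's theorem (\cite{O}, Theorem~I) to produce a positive invertible $a\in M$ such that $x\mapsto JTa\,x\,(JTa)^{-1}$ is a genuine $*$-automorphism. Only then is spatial implementation applied, yielding a unitary $U$ with $(UJTa)x=x(UJTa)$ for all $x\in M$; Corollary~\ref{Griffin-standard} then gives the boundedness of $UJTa$, hence of $T=JU^{-1}(UJTa)a^{-1}$. Your argument becomes correct once this Okayasu correction is inserted before the appeal to spatial implementation.
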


\proof
First at all, by Theorem \ref{standardness} the von Neumann algebra $M$ is standard. Let
$J : H\longrightarrow H$ be a conjugation satisfying the conditions
\medskip

\centerline{$JMJ=M'\text{ and }JzJ=z^*,\qquad z\in Z(M)\, .$}
\smallskip

\noindent Then $JT : H\longrightarrow H$ is a bijective linear operator such that the mapping
\medskip

\centerline{$M\ni x\longmapsto JTx(JT)^{-1} =JTxT^{-1}J\in M$}
\smallskip

\noindent is an algebra automorphism.

Next, by Theorem I in \cite{O}, there exists an invertible $0\leq a\in M$ such that
\smallskip

\centerline{$M\ni x\longmapsto JTax(JTa)^{-1} =JTaxa^{-1}T^{-1}J\in M$}
\smallskip

\noindent is a $*$-isomorphism.

Finally, since every $*$-isomorphism between standard von Neumann algebras is
spatial (see e.g. \cite{Dix}, Chapitre III, \S 1, Th\'eor\`eme 6 or \cite{S-Z}, Corollary 10.15),
there exists a unitary $U\in B(H)$ for which
\smallskip

\centerline{$JTax(JTa)^{-1} =U^{-1}x\;\! U\, ,\qquad x\in M\, ,$}
\smallskip

\noindent that is
\smallskip

\centerline{$(UJTa)x =x(UJTa)\, ,\qquad x\in M\, .$}
\medskip

Now Corollary \ref{Griffin-standard} yields the boundedness of $UJTa\,$, hence
also the boundedness of $T=JU^{-1}(UJTa)a^{-1}$.

\hfill $\square\quad$
\medskip

\section{A quantitative BT-Theorem}
\bigskip

The classical "BT-Theorem" of Murray and von Neumann (see e.g. \cite{Sak2},
Theorem 2.7.14 or \cite{S-Z}, C.6.1) states that if $M$ is a von Neumann algebra
on the Hilbert space $H\,$, $\xi_o\in H$ and $\xi$ belongs to the closure of
$M\xi_o\,$, then $\xi =b\;\!T\xi_o$ where $b\in M$ and $T$ is a densely defined,
closed linear operator, affiliated to $M$. Thus, roughly speaking, we can "lift"
any vector in $\overline{M\xi_o}$ to an operator $b\;\!T$ "related" to $M$.
The more recent proof, due essentially to R. V. Kadison and presented by
C. F. Skau in \cite{Sk}, Lemma 3.4, can be extended to obtain the following
"quantitative" version of the BT-Theorem, which will allow to "lift" vector
sequences in $\overline{M\xi_o}$ converging sufficiently fast to zero in
operator sequences which converge to zero in operator norm.

\begin{theorem}\label{BT}
{\rm (Quantitative BT-Theorem)}
Let $M$ be a von Neumann algebra on a Hilbert space $H$, $\xi_o\in H$,
$\big(\xi_k\big)_{k\geq 1}$ a sequence in $\overline{M\xi_o}\,$, and
$\big(\gamma_k\big)_{k\geq 1}$ a sequence in $(0\, ,+\infty )$ such that
\begin{equation*}
\sum\limits_{k=1}^{\infty} \frac 1{\;\!\gamma_k\;\!}\;\! \|\xi_k\|^2<+\infty\, .
\end{equation*}
Then there exist $a\in M$ with $0\leq a\leq 1\,$, $\eta_o\in
(\;\! \overline{aH}\;\! )\cap (\;\!\overline{M\xi_o}\;\!) =
\overline{aM\xi_o}\,$, as well as a sequence $\big(b_k\big)_{k\geq 1}$
in the operator norm closure of $Ma$, such that
\begin{equation*}
\begin{split}
&\; a\;\!\eta_o =\xi_o\, , \\
&b_k\eta_o = \xi_k\!\text{ and }\| b_k\|\leq\sqrt{\gamma_k}\, ,\qquad k\geq 1\, .
\end{split}
\end{equation*}
\end{theorem}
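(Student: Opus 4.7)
The plan is to adapt the Kadison-Skau strategy behind the classical BT-Theorem and run it simultaneously for the whole sequence $\big(\xi_k\big)$, using carefully tuned weights so that the norm bound $\|b_k\|\le\sqrt{\gamma_k}$ emerges directly from the construction. The single regularising element $a:=(1+A)^{-1/2}$ will be built from one positive, possibly unbounded, self-adjoint operator $A$ affiliated to $M$, defined as a weighted sum of squares of telescoping increments approximating each $\xi_k$ from inside $M\xi_o$.

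Concretely, for every $k\ge 1$ I would pick $\big(x_k^{(n)}\big)_{n\ge 1}\subset M$ with $x_k^{(1)}=0$ and $\|x_k^{(n)}\xi_o-\xi_k\|$ decaying geometrically in $n$ at an auxiliary rate $r<1$ to be fixed later; the telescoping increments $y_k^{(n)}:=x_k^{(n+1)}-x_k^{(n)}\in M$ then satisfy $\|y_k^{(n)}\xi_o\|\le C\;\!\|\xi_k\|\;\!r^{\;\!n-1}$ for some absolute constant $C$. Introducing positive weights $\alpha_{k,n}>0$, I would next define
\begin{equation*}
A\;\!=\;\!\sum_{k,n}\alpha_{k,n}\big(y_k^{(n)}\big)^{\!*}y_k^{(n)}
\end{equation*}
as the positive self-adjoint operator affiliated to $M$ associated to the closed positive quadratic form $q(\eta)=\sum_{k,n}\alpha_{k,n}\|y_k^{(n)}\eta\|^2$. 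The weights are to be engineered so that
\begin{equation*}
\sum_{k,n}\alpha_{k,n}\|y_k^{(n)}\xi_o\|^2<+\infty\quad\text{and}\quad\sum_{n}\alpha_{k,n}^{-1/2}\le\sqrt{\gamma_k}\;\!,
\end{equation*}
which is feasible for $r$ small enough and e.g.\ $\alpha_{k,n}=4^{n+1}/\gamma_k$; the first condition is then secured by the hypothesis $\sum_k\gamma_k^{-1}\|\xi_k\|^2<+\infty$.

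With this in place I would set $a:=(1+A)^{-1/2}\in M$, so $0\le a\le 1$, and $\eta_o:=(1+A)^{1/2}\xi_o$, well defined since the first summability above puts $\xi_o$ in $D(A^{1/2})=D\big((1+A)^{1/2}\big)$; the identity $a\;\!\eta_o=\xi_o$ follows. The operator inequality $\big(y_k^{(n)}\big)^{\!*}y_k^{(n)}\le\alpha_{k,n}^{-1}A\le\alpha_{k,n}^{-1}(1+A)$ yields $\|y_k^{(n)}\;\!a\|\le\alpha_{k,n}^{-1/2}$, so the partial sums
\begin{equation*}
x_k^{(N+1)}a\;\!=\;\!\sum_{n=1}^{N}y_k^{(n)}a\;\!\in\;\! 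Ma
\end{equation*}
are Cauchy in operator norm; their limit $b_k$ satisfies $\|b_k\|\le\sqrt{\gamma_k}\;\!$, lies by construction in the operator norm closure of $Ma\,$, and obeys $b_k\;\!\eta_o=\lim_{N}x_k^{(N+1)}a\;\!\eta_o=\lim_{N}x_k^{(N+1)}\xi_o=\xi_k\;\!$.

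For the asserted intersection, membership of $\eta_o$ is easy: since $1+A\ge 1$, the element $a$ has dense range, so $\overline{aH}=H\;\!$; and $(1+A)^{1/2}$ is affiliated to $M\;\!$, hence commutes with $[M\xi_o]\in M'\;\!$, forcing $\eta_o\in\overline{M\xi_o}$. The set equality $\overline{aH}\cap\overline{M\xi_o}=\overline{aM\xi_o}$ is a short general lemma: the nontrivial inclusion follows by approximating $\zeta\in\overline{aH}\cap\overline{M\xi_o}$ by vectors $a\eta_n$ (dense range of $a$), replacing $\eta_n$ by $e\eta_n$ where $e=[M\xi_o]\in M'$ commutes with $a\in M$, and finally approximating each $e\eta_n$ from within $M\xi_o\;\!$. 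The main obstacle is precisely the simultaneous balancing of the two weight conditions on $\big(\alpha_{k,n}\big)$: the $\alpha_{k,n}$ must grow in $n$ fast enough to force $\sum_{n}\alpha_{k,n}^{-1/2}\le\sqrt{\gamma_k}\;\!$, while the convergence rate $r$ of the chosen approximations must be fast enough, relative to those weights, to keep $\xi_o$ in $D(A^{1/2})\;\!$; once this interplay is arranged, the remainder is a routine Kadison-Skau computation.
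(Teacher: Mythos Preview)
Your approach is essentially the paper's: the same telescoping approximants, the same weights $\alpha_{k,n}=4^{n+1}/\gamma_k$, the same regulariser $a=(1+A)^{-1/2}$, and the same series $b_k=\sum_n y_k^{(n)}a$. The paper differs only in that it never passes to the unbounded limit $A$: it works with the bounded truncations $y_p=\big(1+\sum_{k\le p}\sum_{j\le p}\alpha_{k,j}\,x_{k,j}^{\,*}x_{k,j}\big)^{1/2}$, takes $a$ as the strong limit of $y_p^{-1}$, obtains $\eta$ as a weak cluster point of the bounded sequence $y_p\xi_o$, and then sets $\eta_o=s(a)\eta$. This buys the paper a more elementary argument, avoiding the quadratic-form machinery.

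One point to tighten: your assertion that ``$1+A\ge 1$ implies $a$ has dense range, so $\overline{aH}=H$'' presumes that the form $q(\eta)=\sum_{k,n}\alpha_{k,n}\|y_k^{(n)}\eta\|^2$ is densely defined, which you have not checked and which need not hold (you control $\|y_k^{(n)}\xi_o\|$ but not $\|y_k^{(n)}\|$). In general $A$ is self-adjoint only on $K:=\overline{D(q)}$, and $a=(1+A)^{-1/2}\oplus 0$ on $K\oplus K^\perp$, so $\overline{aH}=K$ may be proper. This does not damage your proof: $\eta_o=(1+A)^{1/2}\xi_o$ lies in $K=\overline{aH}$ by construction, which is all you need. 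The paper handles the same issue by projecting its weak limit $\eta$ onto $\overline{aH}$ via the support projection $s(a)$; you should either do likewise or note explicitly that $K$ is $M'$-invariant (since $q(u'\eta)=q(\eta)$ for unitaries $u'\in M'$), so that $A$ is genuinely affiliated to $M$ on $K$ and your affiliation argument for $\eta_o\in\overline{M\xi_o}$ goes through.
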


\proof
Let $k\geq 1$ be arbitrary. Since $\xi_k\in\overline{M\xi_o}\,$,  we can find by induction
a sequence $\big( x_{\substack{ {} \\ k,j}}\big)_{j\geq 0}$ in $M$ such that
\begin{equation*}
\bigg\|\;\! \xi_k -\sum\limits_{j=0}^n x_{\substack{ {} \\ k,j}}\xi_o\;\!\bigg\| \leq
\frac 1{\;\! 4^{n+2}}\;\! \|\xi_k\|\, ,\qquad n\geq 0\, .
\end{equation*}
Then
\begin{equation*}
\begin{split}
\| x_{\substack{ {} \\ k,o}}\xi_o \| =\;&
\big\|\;\! \xi_k -\big(\xi_k - x_{\substack{ {} \\ k,o}}\xi_o\big)\big\|
\leq \frac{17}{\;\! 16\;\!}\;\! \|\xi_k\|\, ,\\
\| x_{\substack{ {} \\ k,n}}\xi_o \| =\;&
\bigg\|\bigg( \xi_k -\sum\limits_{j=0}^{n-1} x_{\substack{ {} \\ k,j}}\xi_o\bigg)
-\bigg( \xi_k -\sum\limits_{j=0}^n x_{\substack{ {} \\ k,j}}\xi_o\bigg)\bigg\|
\leq \frac 5{\;\! 4^{n+2}}\;\!\|\xi_k\, ,\quad n\geq 1\, .
\end{split}
\end{equation*}
In particular, we can write
\begin{equation}\label{decomposition}
\xi_k =\sum\limits_{j=0}^{\infty} x_{\substack{ {} \\ k,j}}\xi_o
\end{equation}
where the series converges in norm.

Now let us define
\begin{equation*}
y_p=\bigg( 1+\sum\limits_{k=1}^p \sum\limits_{j=0}^p\frac{\;\! 4^{j+1}}{\gamma_k}\;\!
x_{\substack{ {} \\ k,j}}^{\, *}x_{\substack{ {} \\ k,j}}\bigg)^{\! 1/2}\in M\, ,\qquad p\geq 1\, .
\end{equation*}
Since the square root function $\sqrt{t}$ is operator increasing on $[\;\! 0\, ,+\infty )$
(see e.g. \cite{P}, Proposition 1.3.8 or \cite{SZOA}, Proposition 2.7), we have
\medskip

\centerline{$1\leq y_1\leq y_2\leq\, ...\, ,$}
\smallskip

\noindent and since $1/t$ is operator decreasing on $(\;\! 0\, ,+\infty )$
(see e.g. \cite{P}, Proposition 1.3.6 or \cite{SZOA}, 2.6 (7)), we have
\smallskip

\centerline{$1\geq y_1^{\;\! -1}\geq y_2^{\;\! -1}\geq\, ... \,\geq 0\, .$}
\medskip

\noindent Therefore, the sequence $\big( y_p^{\;\! -1}\big)_{p\geq 1}$ is convergent
in the strong operator topology to some $a\in M, 0\leq a\leq 1\, .$

On the other hand, we have for every $p\geq 1$
\medskip

\noindent\hspace{1.26 cm}$\displaystyle \| y_p\xi_o\|^2= (y_p^{\;\! 2}\xi_o\;\! |\;\!\xi_o ) =
\bigg( \xi_o +\sum\limits_{k=1}^p \sum\limits_{j=0}^p\frac{\;\! 4^{j+1}}{\gamma_k}\;\!
x_{\substack{ {} \\ k,j}}^{\, *}x_{\substack{ {} \\ k,j}}\xi_o\,\bigg|\, \xi_o\bigg)$

\noindent\hspace{2.49 cm}$\displaystyle =\|\xi_o\|^2 +\sum\limits_{k=0}^p
\sum\limits_{j=0}^p\frac{\;\! 4^{j+1}}{\gamma_k}\;\!\| x_{\substack{ {} \\ k,j}}\xi_o\|^2$

\noindent\hspace{2.49 cm}$\displaystyle =|\xi_o\|^2 +\sum\limits_{k=0}^p
\frac 4{\;\!\gamma_k\;\!}\;\! \| x_{\substack{ {} \\ k,o}}\xi_o \|^2 +\sum\limits_{k=1}^p
\sum\limits_{j=1}^p\frac{\;\! 4^{j+1}}{\gamma_k}\;\! \| x_{\substack{ {} \\ k,j}}\xi_o\|^2$

\noindent\hspace{2.49 cm}$\displaystyle \leq\|\xi_o\|^2 +\sum\limits_{k=0}^p
\frac 4{\;\!\gamma_k\;\!}\cdot\frac{\;\! 17^2}{\;\! 16^2}\;\! \| \xi_k\|^2 +\sum\limits_{k=1}^p
\sum\limits_{j=1}^p\frac{\;\! 4^{j+1}}{\gamma_k}\cdot\frac{25}{\;\! 4^{2j+4}}\;\!\|\xi_k\|^2$

\noindent\hspace{2.49 cm}$\displaystyle =\|\xi_o\|^2 +\sum\limits_{k=0}^p
\frac 1{\;\!\gamma_k\;\!}\;\! \|\xi_k\|^2\bigg(
\frac{\;\! 289\;\!}{64} +\sum\limits_{j=1}^p\frac{25}{\;\! 4^{j+3}}\bigg)$

\noindent\hspace{2.49 cm}$\displaystyle \leq\|\xi_o\|^2 +\sum\limits_{k=0}^p
\frac 1{\;\!\gamma_k\;\!}\;\! \|\xi_k\|^2\bigg(
\frac{\;\! 289\;\!}{64} +\sum\limits_{j=1}^{\infty}\frac{25}{\;\! 4^{j+3}}\bigg)$

\noindent\hspace{2.49 cm}$\displaystyle =\|\xi_o\|^2 +\frac{\;\! 223\;\!}{48}
\sum\limits_{k=0}^p\frac 1{\;\!\gamma_k\;\!}\;\! \|\xi_k\|^2 \leq \|\xi_o\|^2 +5\;\!
\sum\limits_{k=0}^{\infty}\frac 1{\;\!\gamma_k\;\!}\;\! \|\xi_k\|^2\, ,$
\medskip

\noindent that is
\begin{equation*}
\| y_p\xi_o\|\leq c\, ,\qquad p\geq 1
\end{equation*}
where
\medskip

\centerline{$\displaystyle c=\bigg(\|\xi_o\|^2 +5\;\! \sum\limits_{k=0}^{\infty}
\frac 1{\;\!\gamma_k\;\!}\;\! \|\xi_k\|^2\bigg)^{\! 1/2}<+\infty\, .$}
\smallskip

\noindent Since closed balls in $H$ are weakly compact, there exists a weakly
convergent subnet $\big( y_{p_\iota}\xi_o\big)_{\iota}$ of the bounded
sequence $\big( y_p\xi_o\big)_{p\geq 1}$ (actually there exists a weakly
convergent subsequence because the closed balls in the closed linear
span of the sequence $\big( y_p\xi_o\big)_{p\geq 1}$ are compact and metrizable).
Let $\eta$ denote the weak limit of $\big( y_{p_\iota}\xi_o\big)_{\iota}\,$.
Clearly, $\eta\in\overline{M\xi_o}\,$.

We claim that $a\;\!\eta =\xi_o\,$. Indeed, for every $\xi\in H$ we have
\begin{equation*}
\begin{split}
\big|\big( y_{p_\iota}\xi_o\;\! \big|\;\! a\;\!\xi \big) -\big(\xi_o\;\! \big|\;\! \xi \big)\big|
=\;&\big|\big( y_{p_\iota}\xi_o\;\! \big|\;\! a\;\!\xi \big) -
\big( y_{p_\iota}\xi_o\;\! \big|\;\! y_{p_\iota}^{\;\! -1}\xi \big)\big| =
\big|\big( y_{p_\iota}\xi_o\;\! \big|\;\! a\;\!\xi -y_{p_\iota}^{\;\! -1}\xi \big)\big| \\
\leq\;&\big\|y_{p_\iota}\xi_o\big\|\cdot\big\| a\;\!\xi -y_{p_\iota}^{\;\! -1}\xi \big\|
\leq c\;\! \big\| a\;\!\xi -y_{p_\iota}^{\;\! -1}\xi \big\|
\end{split}
\end{equation*}
and $y_p^{\;\! -1}\overset{so}{\longrightarrow} a$ yields $\lim\limits_{\iota}
\big( y_{p_\iota}\xi_o\;\! \big|\;\! a\;\!\xi \big) =\big(\xi_o\;\! \big|\;\! \xi \big)\,$.
Taking now into account that the weak limit of $\big( y_{p_\iota}\xi_o\big)_{\iota}$
is $\eta\,$, we conclude that $\big( \eta\;\! \big|\;\! a\;\!\xi \big) =
\big(\xi_o\;\! \big|\;\! \xi \big)\,$, hence
\smallskip

\centerline{$\big( a\;\!\eta\;\! \big|\;\! \xi \big) =\big( \eta\;\! \big|\;\! a\;\!\xi \big) =
\big(\xi_o\;\! \big|\;\! \xi \big)\, .$}
\smallskip

Let $\eta_o$ denote the orthogonal projection of $\eta$ onto $\overline{aH}$,
that is $\eta_o= s(a)\eta$ where $s(a)\in M$ is the support projection of $a\,$.
Then $a\;\!\eta_o =a\;\! s(a)\eta =a\;\!\eta =\xi_o\,$.

Denoting now by $p_{\xi_o}{\!\!\!\! '}$ the cyclic projection in $M'$ associated
to $\xi_o\,$, that is the orthogonal projection onto $\overline{M\xi_o}\,$, we have
$\eta =p_{\xi_o}{\!\!\!\! '}\;\eta$ and consequently
\smallskip

\centerline{$\eta_o =s(a)\eta =s(a)p_{\xi_o}{\!\!\!\! '}\;\eta =
p_{\xi_o}{\!\!\!\! '}\; s(a)\eta\in\overline{M\xi_o}\, .$}
\smallskip

\noindent Therefore $\eta_o\in (\;\! \overline{aH}\;\! )\cap (\;\!\overline{M\xi_o}\;\!)\,$.

The inclusion $(\;\! \overline{aH}\;\! )\cap (\;\!\overline{M\xi_o}\;\!)\supset
\overline{aM\xi_o}$ is obvious. For the proof of the converse inclusion let $\zeta\in
(\;\! \overline{aH}\;\! )\cap (\;\!\overline{M\xi_o}\;\!)$ be arbitrary. Choosing
a sequence $\big(\eta_k\big)_{k\geq 1}$ in $H$ such that $\zeta =
\lim\limits_{k}a\;\!\eta_k\,$, we have

\centerline{$\zeta =p_{\xi_o}{\!\!\!\! '}\;\zeta =\lim\limits_{k}p_{\xi_o}{\!\!\!\! '}\;
a\;\!\eta_k =\lim\limits_{k}a\;\! p_{\xi_o}{\!\!\!\! '}\;\eta_k\, .$}

\noindent Since $p_{\xi_o}{\!\!\!\! '}\;\eta_k\in \overline{M\xi_o}\, ,k\geq 1\,$,
it follows that $\zeta$ belongs to the closure of $a\overline{M\xi_o}\,$, that
is to $\overline{aM\xi_o}\,$.

Summing up the above, we have $a\in M\, ,\;\! 0\leq a\leq 1\,$, and $\eta_o\in
(\;\! \overline{aH}\;\! )\cap (\;\!\overline{M\xi_o}\;\!) =\overline{aM\xi_o}$ such
that
\begin{equation}\label{eta}
a\;\!\eta_o =\xi_o\, .
\end{equation}

Let now $k\geq 1$ and $j\geq 0$ be arbitrary. For every $p\geq \max\big( k\, ,j\big)$
we have
\smallskip

\noindent\hspace{1.37 cm}$\displaystyle
y_p^{\;\! -1}\bigg(\frac{\;\! 4^{j+1}}{\gamma_k}\;\! x_{\substack{ {} \\ k,j}}^{\, *}
x_{\substack{ {} \\ k,j}}\bigg) y_p^{\;\! -1}
\leq y_p^{\;\! -1}\bigg( \sum\limits_{k'=1}^p \sum\limits_{j'=0}^p
\frac{\;\! 4^{j+1}}{\gamma_{k'}}\;\!
x_{\substack{ {} \\ k',j'}}^{\, *}x_{\substack{ {} \\ k',j'}}\bigg)^{\! 1/2} y_p^{\;\! -1}$

\noindent\hspace{5.125 cm}$\displaystyle
=y_p^{\;\! -1}\big( y_p^{\;\! 2}-1\big) y_p^{\;\! -1} =1-y_p^{\;\! -2}$
\medskip\smallskip

\noindent\hspace{5.125 cm}$\leq 1\, .$
\smallskip

\noindent Taking into account that $y_p^{\;\! -1}\overset{so}{\longrightarrow} a\,$,
we obtain
\smallskip

\centerline{$\displaystyle a\bigg(\frac{\;\! 4^{j+1}}{\gamma_k}\;\! x_{\substack{ {} \\ k,j}}^{\, *}
x_{\substack{ {} \\ k,j}}\bigg) a\leq 1\,\Longleftrightarrow\,
a\;\! x_{\substack{ {} \\ k,j}}^{\, *}x_{\substack{ {} \\ k,j}}a\leq\frac{\gamma_k}{\;\! 4^{j+1}}
\,\Longleftrightarrow\, \| x_{\substack{ {} \\ k,j}}a\|\leq\frac{\sqrt{\gamma_k}}{\;\! 2^{j+1}}\, .$}
\medskip

By the above estimation, we can define
\begin{equation*}
b_k =\sum\limits_{j=0}^{\infty} x_{\substack{ {} \\ k,j}}a\, ,\qquad k\geq 1
\end{equation*}
where the series converges in the norm and thus $b_k$ belongs to the operator norm closure
of $Ma\,$. Moreover,
\begin{equation*}
\| b_k\|\leq \sum\limits_{j=0}^{\infty} \| x_{\substack{ {} \\ k,j}}a\| \leq
\sum\limits_{j=0}^{\infty} \frac{\sqrt{\gamma_k}}{\;\! 2^{j+1}} \leq \sqrt{\gamma_k}\, ,
\qquad k\geq 1\, .
\end{equation*}
By (\ref{eta}) and (\ref{decomposition}) holds also
\smallskip

\centerline{$\displaystyle b_k\eta_o=\sum\limits_{j=0}^{\infty} x_{\substack{ {} \\ k,j}}a\;\!\eta_o
=\sum\limits_{j=0}^{\infty} x_{\substack{ {} \\ k,j}}\xi_o =\xi_k\, ,\qquad k\geq 1\, .$}

\hfill $\square\quad$
\medskip

Now we show how the above theorem can be used to "lift" vector sequences which
converge sufficiently fastly to zero in operator sequences which converge to zero
in operator norm.

\begin{corollary}\label{BT-convergence}
{\rm (BT-Theorem for convergence)}
Let $M$ be a von Neumann algebra on a Hilbert space $H$, $\xi_o\in H$, and
$\big(\xi_k\big)_{k\geq 1}$ a sequence in $\overline{M\xi_o}$ such that
\smallskip

\centerline{$\displaystyle \sum\limits_{k=1}^{\infty} \|\xi_k\|^2<+\infty\, .$}
\smallskip

\noindent Then there exist $a\in M$ with $0\leq a\leq 1\,$, $\eta_o\in
(\;\! \overline{aH}\;\! )\cap (\;\!\overline{M\xi_o}\;\!) =
\overline{aM\xi_o}\,$, as well as a sequence $\big(b_k\big)_{k\geq 1}$
in the operator norm closure of $Ma$ satisfying
\smallskip

\noindent\hspace{4.9 cm}$a\;\!\eta_o =\xi_o\, ,$
\smallskip

\noindent\hspace{4.8 cm}$b_k\eta_o = \xi_k\!\text{ for }k\geq 1\, ,$
\smallskip

\noindent\hspace{4.8 cm}$\lim\limits_{k\to\infty}\| b_k\| =0\, .$
\end{corollary}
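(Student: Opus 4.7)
The plan is to deduce the corollary directly from the quantitative BT-Theorem by choosing the weight sequence $(\gamma_k)$ carefully. Theorem~\ref{BT} produces $b_k$ satisfying $\|b_k\|\leq\sqrt{\gamma_k}\,$, so to obtain $\|b_k\|\to 0$ it is enough to arrange $\gamma_k\to 0\,$; on the other hand the hypothesis of Theorem~\ref{BT} requires $\sum_{k}\|\xi_k\|^2/\gamma_k<+\infty\,$. Hence the whole task reduces to exhibiting a sequence $(\gamma_k)\subset(0,+\infty)$ with $\gamma_k\to 0$ and $\sum_{k}\|\xi_k\|^2/\gamma_k<+\infty$; once such $(\gamma_k)$ is found, Theorem~\ref{BT} yields $a\,$, $\eta_o$ and $(b_k)$ with all the required properties automatically.

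To construct $(\gamma_k)$ I would use the tail sums of the series $\sum\|\xi_k\|^2\,$. Set
\begin{equation*}
s_k:=\sum_{j\geq k}\|\xi_j\|^2\, ,\qquad k\geq 1\, ,
\end{equation*}
so that $s_1<+\infty\,$, $s_k\downarrow 0$ and $s_k-s_{k+1}=\|\xi_k\|^2\,$. Then define
\begin{equation*}
\gamma_k:=\sqrt{s_k}+2^{-k}\, ,\qquad k\geq 1\, ,
\end{equation*}
which is strictly positive and tends to $0\,$. The condition $\sum\|\xi_k\|^2/\gamma_k<+\infty$ will follow from the bound $\|\xi_k\|^2/\gamma_k\leq\|\xi_k\|^2/\sqrt{s_k}$ (with the convention that the right-hand side is $0$ whenever $\xi_k=0$, i.e. $s_k=s_{k+1}$) together with the elementary integral comparison: since $t\mapsto 1/\sqrt{t}$ is non-increasing on $(0,+\infty)\,$, for each $k$ with $s_k>0$ one has
\begin{equation*}
\frac{\|\xi_k\|^2}{\sqrt{s_k}}=\frac{s_k-s_{k+1}}{\sqrt{s_k}}\leq\int_{s_{k+1}}^{s_k}\frac{dt}{\sqrt{t}}\, ,
\end{equation*}
and summing these inequalities yields the telescoping bound $\int_0^{s_1} t^{-1/2}\,dt=2\sqrt{s_1}<+\infty\,$.

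With $(\gamma_k)$ in hand, I would simply apply Theorem~\ref{BT} to $\xi_o\,$, the sequence $(\xi_k)$ and the weights $(\gamma_k)\,$: this delivers $a\in M$ with $0\leq a\leq 1\,$, a vector $\eta_o\in(\,\overline{aH}\,)\cap(\,\overline{M\xi_o}\,)=\overline{aM\xi_o}\,$, and a sequence $(b_k)$ in the operator norm closure of $Ma$ with $a\,\eta_o=\xi_o\,$, $b_k\eta_o=\xi_k$ and
\begin{equation*}
\|b_k\|\leq\sqrt{\gamma_k}=\bigl(\sqrt{s_k}+2^{-k}\bigr)^{1/2}\longrightarrow 0\, ,
\end{equation*}
which is exactly the desired conclusion. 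The only delicate point in the argument is the choice of $(\gamma_k)\,$: one must pick weights that decay, but not so rapidly as to destroy the summability condition of Theorem~\ref{BT}; the tail-sum choice $\gamma_k=\sqrt{s_k}+2^{-k}$ above is adjusted precisely to balance these two constraints via the integral comparison.
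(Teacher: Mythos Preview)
Your proposal is correct and follows essentially the same strategy as the paper: reduce to finding a sequence $\gamma_k\to 0$ with $\sum_k\|\xi_k\|^2/\gamma_k<+\infty$, construct such a sequence from the tail sums $s_k=\sum_{j\geq k}\|\xi_j\|^2$, and then invoke Theorem~\ref{BT}. The paper's explicit choice is $\gamma_k=\sqrt{s_k}+\sqrt{s_{k+1}}$ (with $2^{-k}$ when $\|\xi_k\|=0$), which makes $\|\xi_k\|^2/\gamma_k=\sqrt{s_k}-\sqrt{s_{k+1}}$ telescope exactly, whereas your variant $\gamma_k=\sqrt{s_k}+2^{-k}$ reaches the same conclusion via the integral comparison; both are standard and equivalent in spirit.
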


\proof
It is well known that for any convergent series $\displaystyle \sum\limits_{k=0}^{\infty}
\alpha_k$ of positive numbers there is a sequence $0<\gamma_k\longrightarrow 0$
such that the series $\displaystyle \sum\limits_{k=0}^{\infty}\frac 1{\;\!\gamma_k\;\!}\;\!
\alpha_k$ is still convergent (see e.g. \cite{K}, \S 39, 175.4 or \cite{C-Z2}, Lemma 1.5):
we can take, for example,
\bigskip

\centerline{$\displaystyle \gamma_k=\begin{cases}
\bigg(\sum\limits_{j=k}^{\infty}\alpha_j\bigg)^{\! 1/2}+
\bigg(\sum\limits_{j=k+1}^{\infty}\alpha_j\bigg)^{\! 1/2} & \text{if }\alpha_k>0\, , \\
\hspace{1.9 cm}\displaystyle \frac 1{\;\! 2^k} & \text{if }\alpha_k=0\, .
\end{cases}$}

Applying the above remark to the series $\displaystyle \sum\limits_{k=0}^{\infty}
\|\xi_k\|^2$ we get a sequence $0<\gamma_k\longrightarrow 0$ such that
\smallskip

\centerline{$\displaystyle \sum\limits_{k=1}^{\infty} \frac 1{\;\!\gamma_k\;\!}\;\! \|\xi_k\|^2
<+\infty\, .$}
\smallskip

\noindent Now we can apply Theorem \ref{BT} obtaining $a\, ,\eta_o$ and the
sequence $\big(b_k\big)_{k\geq 1}$ having the desired properties.

\hfill $\square\quad$
\medskip

We note that the classical BT-Theorem follows with $\xi_k=0$, identically for $k\geq 2\,$.

\medskip
\section{Automatic continuity properties of standard von Neumann algebras}
\bigskip

Let $M$ be a standard von Neumann algebra on a Hilbert space $H$.
Corollary \ref{Griffin-standard} claims the boundedness
of every linear operator on $H$ which commutes with all operators belonging to $M$,
that is which intertwines the identity map on $M$ with itself. Using Corollary
\ref{BT-convergence} we shall next prove a general continuity theorem, which implies
the continuity of additive maps intertwining the identity map on $M$ with an
arbitrary bounded real linear map from $M$ into the bounded linear operators on
some Banach space $X\,$.

We recall (see e.g. \cite{Wil}, Problem 5-3-103) : if $X\, ,Y$ are topological vector
spaces and $T : X\longrightarrow Y$ is a $\mathbb{Q}$-homogeneous mapping
which has closed graph, then $T$ is $\mathbb{R}$-homogeneous. Indeed, for any
$x\in X$ and real $\lambda\,$, choosing a sequence $\big(\lambda_k\big)_{k\geq 1}$
of rational numbers converging to $\lambda\,$, we have
\medskip

\centerline{$\big(\lambda_k x\, ,T(\lambda_k x)\big) =\big(\lambda_k x\, ,
\lambda_kT(x)\big)\longrightarrow \big(\lambda x\, ,\lambda T(x)\big) ,$}
\smallskip

\noindent so $\big(\lambda x\, ,\lambda T(x)\big)$ belongs to the graph of $T$.

Consequently, if $X\, ,Y$ are Banach spaces and $T : X\longrightarrow Y$ is an
additive, hence $\mathbb{Q}$-linear map, having closed graph, then $T$ is real
linear and by the closed graph theorem it follows also its boundedness.

\begin{theorem}\label{commutant}
{\rm (Boundedness of intertwining operators)}
Let $M$ be a von Neumann algebra on a Hilbert space $H$. Then
{\rm (i)}$\,\Longrightarrow${\rm (ii)}$\,\Longrightarrow${\rm (iii)} where $:$
\begin{itemize}
\item[(i)] The weak ${}^*$topology on $M'$ coincides with the weak operator topology,
that is every normal positive linear functional on $M'$ is a finite sum of functionals of the
form $\omega_\zeta{\!\! '} : M'\ni x' \longmapsto (x'\zeta |\zeta )\, ,\;\! \zeta\in H$.
\item[(ii)] For every sequence $\big(\xi_k\big)_{k\geq 1}$ in $H$, there exist $n\geq 1$
and $\zeta_1\, ,\, ...\, ,\zeta_n\in H$ such that $\xi_k$ belongs to the closure of
$\;\displaystyle \sum\limits_{j=1}^n M\zeta_j$ for each $k\geq 1\,$.
\item[(iii)] For any Banach space $X$, bounded real linear map
$\Phi$ of $M$ into the Banach space $B(X)$ of all bounded linear operators on $X$, and
additive operators $T_1, T_2 : H\longrightarrow X\!$ satisfying the intertwining condition
\begin{equation}\label{inter}
T_1x=\Phi(x)\;\!T_2\, ,\qquad x\in M,
\end{equation}
the operator $T_1$ and the composition of $T_2$ with the canonical map of $X\!$
onto the quotient Banach space $X\big/\!\! \bigcap\limits_{x\in M}\!\!{\rm Ker}\;\!\Phi (x)$
$($identifiable with 
$\bigvee\limits_{x\in M}r\big(\Phi(x)\big)T_2$ if $X$ is a Hilbert space$\;\! )\!$
are both necessarily real linear and bounded.
\end{itemize}
\end{theorem}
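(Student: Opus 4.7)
\emph{Overall strategy.} The plan is to handle (i)$\Rightarrow$(ii) and (ii)$\Rightarrow$(iii) by quite different routes. For the first implication I would bundle the whole sequence $(\xi_k)$ into a single normal positive functional on $M'$ and then apply (i) directly. For the second I would combine the $\mathbb{Q}$-linear closed-graph remark recalled just before the statement with the quantitative BT-theorem (Corollary~\ref{BT-convergence}), but apply the latter on a matrix amplification $M_n(M)\subset B(H^n)$ rather than on $M$ itself, in order to force the several-generator situation delivered by (ii) into the single-cyclic-vector setting required by the BT statement.

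\emph{(i)$\Rightarrow$(ii).} Given $(\xi_k)_{k\ge 1}$ in $H$, choose positive scalars $c_k$ with $\sum c_k\|\xi_k\|^2<\infty$ (for instance $c_k=2^{-k}/(1+\|\xi_k\|^2)$) and form
\[
\varphi':=\sum_{k\ge 1}c_k\,\omega'_{\xi_k}.
\]
The series converges in norm in $(M')_*$, so $\varphi'$ is a normal positive functional; by (i) one may write $\varphi'=\sum_{j=1}^n\omega'_{\zeta_j}$ for suitable $\zeta_1,\dots,\zeta_n\in H$. The support of $\omega'_\eta$ on $M'$ is the cyclic projection onto $\overline{M\eta}$, and supports of positive functionals add to suprema, so equating $s(\varphi')$ computed from both representations gives $\overline{\sum_{k\ge 1} M\xi_k}=\overline{\sum_{j=1}^n M\zeta_j}$; in particular each $\xi_k$ lies in the right-hand side, which is (ii).

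\emph{(ii)$\Rightarrow$(iii), closedness of $T_1$.} In view of the $\mathbb{Q}$-linear closed-graph remark it suffices to show that $\xi_k\to0$ in $H$ with $\sum\|\xi_k\|^2<\infty$ forces $T_1\xi_k\to0$ in $X$. Feed $(\xi_k)$ into (ii) to obtain $\zeta_1,\dots,\zeta_n$ and pass to the amplification $\widetilde M:=M_n(M)\subset B(H^n)$, $\widetilde\xi_0:=(\zeta_1,\dots,\zeta_n)$, $\widetilde\xi_k:=(\xi_k,0,\dots,0)$. A matrix with arbitrary first row and zero other rows shows $\widetilde\xi_k\in\overline{\widetilde M\widetilde\xi_0}$, and $\sum\|\widetilde\xi_k\|^2<\infty$ is immediate. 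Corollary~\ref{BT-convergence} then produces $\widetilde a\in\widetilde M$, $\widetilde\eta_0\in H^n$ and $\widetilde b_k$ in the norm closure of $\widetilde M\widetilde a$ with $\widetilde a\widetilde\eta_0=\widetilde\xi_0$, $\widetilde b_k\widetilde\eta_0=\widetilde\xi_k$ and $\|\widetilde b_k\|\to 0$. Reading off the first coordinate yields $\xi_k=\sum_{j=1}^n b_k^{1j}\eta_{0,j}$ with $b_k^{1j}\in M$ and $\|b_k^{1j}\|\le\|\widetilde b_k\|\to 0$. Additivity of $T_1$ and the intertwining relation (\ref{inter}) then give
\[
T_1\xi_k=\sum_{j=1}^n\Phi(b_k^{1j})\,T_2\eta_{0,j},
\]
whose norm is bounded by $\|\Phi\|\sum_j\|b_k^{1j}\|\,\|T_2\eta_{0,j}\|\to 0$.

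\emph{The quotient part, and the main obstacle.} Once $T_1$ is bounded, set $N:=\bigcap_{x\in M}\mathrm{Ker}\,\Phi(x)$ and let $\pi:X\to X/N$ be the quotient; each $\Phi(x)$ annihilates $N$ and hence factors as $\hat\Phi(x)\circ\pi$ with $\hat\Phi(x):X/N\to X$ bounded. To check closedness of the graph of $\pi T_2$, suppose $\xi_k\to0$ and $\pi T_2\xi_k\to\bar y$; then $\Phi(x)T_2\xi_k=T_1(x\xi_k)\to 0$ by the boundedness of $T_1$, so $\hat\Phi(x)\bar y=0$ for every $x\in M$, and lifting $\bar y=\pi(y)$ forces $y\in N$ and thus $\bar y=0$. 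The $\mathbb{Q}$-linear remark again delivers real-linearity and boundedness of $\pi T_2$. The main technical obstacle is the transition from the \emph{finite-sum} closure $\overline{\sum_j M\zeta_j}$ produced by (ii) to the \emph{single}-cyclic-vector hypothesis required by Corollary~\ref{BT-convergence}; it is precisely the amplification to $M_n(M)\subset B(H^n)$ that bridges this gap, the payoff being that the operator-norm-small matrices $\widetilde b_k$ still have $M$-valued entries accessible, coordinatewise, through the intertwining relation.
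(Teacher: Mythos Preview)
Your proposal is correct and follows essentially the same route as the paper's own proof: the bundling of $(\xi_k)$ into a single normal positive functional on $M'$ for (i)$\Rightarrow$(ii), the amplification to $M_n(M)\subset B(H^n)$ together with Corollary~\ref{BT-convergence} for the continuity of $T_1$, and the closed-graph argument for $\pi T_2$ are exactly the paper's steps. The only cosmetic difference is that for the quotient part you factor $\Phi(x)=\hat\Phi(x)\circ\pi$ directly, whereas the paper phrases the same computation through bounded linear functionals and Hahn--Banach; the two formulations are equivalent.
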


\proof
To show (i)$\;\!\Longrightarrow$(ii) let us assume that (i) holds and let
$\big(\xi_k\big)_{k\geq 1}$ be a sequence in $H$.

The support projection $s(\omega_{\xi_k}{\!\! '})$ of $\omega_{\xi_k}{\!\! '} : M'\ni x'
 \longmapsto (x'\xi_k |\xi_k )$ is the orthogonal
projection onto $\overline{M\xi_k}\,$, so the range of the support projection $s(\varphi')$ of
the normal positive linear functional
\medskip

\centerline{$\displaystyle \varphi' =\sum\limits_{k=1}^{\infty}
\frac 1{\;\! 2^k(1+\|\xi_k\|^2)\;\!}\,\omega_{\xi_k}{\!\! '}$}

\noindent on $M'$, which is $\displaystyle \bigvee\limits_{k=1}^{\infty}\!
s(\omega_{\xi_k}{\!\! '})\,$, contains the sequence $\big(\xi_k\big)_{k\geq 1}\,$.
But by (i) there exist finitely many $\zeta_1\, ,\, ...\, ,\zeta_n\in H$ such that
$\displaystyle \varphi' =\sum\limits_{j=1}^n\omega_{\zeta_j}{\!\! '}$ and so
$s(\varphi') = \displaystyle \bigvee\limits_{j=1}^n\! s(\omega_{\zeta_j}{\!\! '})$
is the orthogonal projection onto the closure of $\;\displaystyle \sum\limits_{j=1}^n
M\zeta_j\,$.

For (ii)$\;\!\Longrightarrow$(iii) let us assume that (ii) holds and let $X$ be a Banach
space, $\Phi: M\longrightarrow B(X)$ a bounded real linear map, and
$T_1\, ,T_2 : H\longrightarrow X$ additive maps satisfying the intertwining condition
\eqref{inter}.

To prove the continuity of $T_1$ it is enough to verify its continuity in $0\,$,
which follows once we prove  that for every sequence $\big(\xi_k\big)_{k\geq 1}$ in $H$
with
\smallskip

\centerline{$\displaystyle \|\xi_k\|\leq\frac 1{\;\! 2^k}\, ,\qquad k\geq 1$}
\smallskip

\noindent the convergence $T_1\xi_k\longrightarrow 0$ holds true.

By (ii) there exist finitely many $\zeta_1\, ,\, ...\, ,\zeta_n\in H$ such that each $\xi_k$
belongs to the closure of $\;\displaystyle \sum\limits_{j=1}^n M\zeta_j\,$.

Let $M_n(M)$ denote the von Neumann algebra of all $n\times n$ matrices with
entries in $M$, acting on the Hilbert space $\displaystyle H_n =
\bigoplus\limits_{j=1}^n H$. With
\smallskip

\centerline{$\displaystyle \widetilde{\zeta} =\bigoplus\limits_{j=1}^n \zeta_j\in H_n\, ,
\quad \widetilde{\xi}_k =\xi_k\oplus \underbrace{0\oplus\, ...\,\oplus 0}_{n-1\,\text{times}}
\in H_n\, , k\geq 1$}
\smallskip

\noindent we have $\displaystyle \sum\limits_{k=1}^{\infty}\|\widetilde{\xi}_k\|^2 =
\sum\limits_{k=1}^{\infty}\| \xi_k\|^2<+\infty$ and $\widetilde{\xi}_k\in
\overline{M_n(M)\;\!\widetilde{\zeta}}\, ,k\geq 1\,$, so we can apply Corollary
\ref{BT-convergence} obtaining (among other things) a vector $\widetilde{\eta}\in H_n$
and a sequence $\big(\;\!\widetilde{b}_k\big)_{k\geq 1}$ in $M_n(M)$ with
\smallskip

\noindent\hspace{4.72 cm}$\widetilde{b}_k\widetilde{\eta} =
\widetilde{\xi}_k\, ,\qquad k\geq 1\, ,$
\smallskip

\noindent\hspace{4.72 cm}$\lim\limits_{k\to\infty}\| \widetilde{b}_k\| =0\, .$
\smallskip

\noindent If $b_{k1}\, ,\, ...\, , b_{kn}$ is the first row of the matrix $\widetilde{b}_k$
and $\eta_1\, ,\, ...\, ,\,\eta_n$ are the components of $\widetilde{\eta}$ then
\smallskip

\noindent\hspace{3.8 cm}$\displaystyle \xi_k =\sum\limits_{j=1}^n b_{kj}\;
\!\eta_j\, ,\qquad k\geq 1\, ,$

\centerline{$\| b_{kj}\|\leq \| \widetilde{b}_k\|\, ,\qquad k\geq 1\text{ and }1\leq j\leq n\, .$}
\smallskip

\noindent Consequently
\begin{equation*}
\begin{split}
\| T_1\xi_k\| =\;&\bigg\|\sum\limits_{j=1}^n T_1\;\! b_{kj}\;\!\eta_j\bigg\|
=\bigg\|\sum\limits_{j=1}^n\Phi(b_{kj})T_2\;\!\eta_j\bigg\|\leq\|\Phi\|\sum\limits_{j=1}^n
\| b_{kj}\|\!\cdot\!\| T_2\eta_j\| \\
\leq\;& \|\Phi\|\| \widetilde{b}_k\|\sum\limits_{j=1}^n\| T_2\eta_j\| \longrightarrow 0
\end{split}
\end{equation*}
and so $T_1\xi_k\longrightarrow 0\,$.
\smallskip

Having verified the continuity of the additive, hence $\mathbb{Q}$-linear map
$T_1\,$, its real linearity follows immediately.

We go next to prove that the graph of the composition $\widehat{T_2}$ of
$T_2$ with the canonical map of $X\!$ onto the quotient Banach space
$X\big/\!\! \bigcap\limits_{x\in M}\!\!{\rm Ker}\;\!\Phi (x)$ is closed.

We shall denote the canonical image of $\zeta\in X$ by $\widehat{\zeta}\,$,
so that $\widehat{T_2}(\xi ) =\widehat{T_2(\xi )}\;\! ,\xi\in H$.
Taking into account the additivity of $T_2\,$, it is enough to prove that if
$\big(\xi_k\big)_{k\geq 1}\subset H$ is a sequence such that $\xi_k\longrightarrow 0$
and $\widehat{T_2\xi_k}\longrightarrow \widehat{\zeta}\,$, then $\widehat{\zeta} =
\widehat{0}\,$, that is
\begin{equation}\label{kernel}
\Phi (x)\zeta =0\, ,\qquad x\in M\, .
\end{equation}

For let $x\in M$ be arbitrary. If $\varphi$ is any bounded linear functional on $X$,
then the composition $\varphi\circ\Phi (x)$ is a bounded linear functional on $X$
which vanishes on $\bigcap\limits_{y\in M}\!\!{\rm Ker}\;\!\Phi (y)\;\!$, defining thus
the bounded linear functional
\smallskip

\centerline{$\widehat{\varphi\circ\Phi (x)} :
X\big/\!\! \bigcap\limits_{x\in M}\!\!{\rm Ker}\;\!\Phi (x)\ni\widehat{\eta}
\longmapsto \varphi \big(\Phi (x)\eta\big) .$}
\smallskip

\noindent Since by \eqref{inter} and by the boundedness of $T_1$, we have
\[
\begin{split}
\varphi \big(\Phi (x)\zeta\big) =\;&\big( \widehat{\varphi\circ\Phi (x)}\big)
(\widehat{\zeta})
= \lim\limits_{k\to\infty}\big( \widehat{\varphi\circ\Phi (x)}\big)
\big(\widehat{T_2\xi_k}\big) \\
=\;&\lim\limits_{k\to\infty} \varphi\big( \Phi (x)\;\! T_2\xi_k\big)
= \lim\limits_{k\to\infty} \varphi\big( T_1x\xi_k\big) \\
=\;&0
\end{split}
\]
for any $\varphi\,$, the Hahn-Banach theorem yields (\ref{kernel}).

We conclude that the additive, hence $\mathbb{Q}$-linear map $\widehat{T_2}$
has closed graph and, by the remarks before the statement of the theorem, it follows
its real-linearity and continuity.

\hfill $\square\quad$
\medskip

In particular :

\begin{corollary}\label{commutant-by-support}
Let $M$ be a von Neumann algebra on a Hilbert space $H$ such that every
normal positive linear functional on $M'$ is a vector functional, that is of the form
$\omega_\xi{\!\! '} : M'\ni x'\longmapsto (x'\xi |\xi )$ for an appropriate  $\xi\in H$.
Then statement ${\rm (iii)}$ in Theorem \ref{commutant} holds true.



\end{corollary}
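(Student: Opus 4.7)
The plan is simply to verify that the hypothesis of the corollary implies condition (i) of Theorem \ref{commutant}, and then invoke the chain (i)$\Longrightarrow$(ii)$\Longrightarrow$(iii) already established there. Condition (i) asks that every normal positive linear functional on $M'$ be a \emph{finite} sum of vector functionals $\omega_\zeta' : M'\ni x'\longmapsto (x'\zeta\,|\,\zeta)$, while the hypothesis of the corollary asserts that each such functional is itself a single vector functional $\omega_\xi'$, i.e., a one-term sum. Hence (i) holds a fortiori.

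Since Theorem \ref{commutant} gives (i)$\Longrightarrow$(iii), the desired conclusion follows at once. There is no serious obstacle here: the corollary is a convenient restatement of Theorem \ref{commutant} under a commonly encountered strengthening of condition (i), and its proof consists solely of the observation that a single vector functional is a special case of a finite sum of vector functionals.
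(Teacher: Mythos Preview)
Your proof is correct and matches the paper's approach: the corollary is stated in the paper with no proof beyond an empty $\square$, since it follows immediately from the observation that a single vector functional is trivially a finite sum of vector functionals, so condition (i) of Theorem \ref{commutant} holds.
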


\hfill $\square\quad$
\medskip

\begin{corollary}\label{commutant-cases}
{\rm (Cases of automatic boundedness of intertwining operators)}
Let $M$ be a von Neumann algebra on a Hilbert space $H$.
Then statement ${\rm (iii)}$ in Theorem \ref{commutant} holds true
in each one of the following situations $:$


\begin{itemize}
\item[(1)] $M$ has a cyclic vector;
\item[(2)] $M$ is properly infinite;
\item[(3)] $M$ is standard.
\end{itemize}
\end{corollary}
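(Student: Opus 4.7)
The plan is to verify, in each of the three cases, one of the conditions {\rm (i)} or {\rm (ii)} of Theorem \ref{commutant}, from which the desired conclusion {\rm (iii)} follows automatically. Cases {\rm (1)} and {\rm (3)} will be handled by general structural observations, while case {\rm (2)} will require an \emph{ad hoc} construction using partial isometries afforded by proper infiniteness.

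Case {\rm (1)} is immediate: since $\xi_o$ is cyclic for $M\,$, one has $H=\overline{M\xi_o}\,$, so any sequence $(\xi_k)_{k\geq 1}\subset H$ satisfies $\xi_k\in\overline{M\xi_o}\,$, verifying {\rm (ii)} with $n=1$ and $\zeta_1=\xi_o\,$. Case {\rm (3)} is conceptual: the same conjugation $J$ that witnesses the standardness of $M$ also witnesses that of $M'\,$, since $JM'J=M$ and the relation $JzJ=z^*$ persists on $Z(M')=Z(M)\,$; by Haagerup's standard form theory, every normal positive linear functional on $M'$ is then representable as a single vector functional $\omega_\zeta{\!\! '}\,$, which is exactly condition {\rm (i)}.

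For case {\rm (2)} the idea is to use proper infiniteness to concentrate any countable family of vectors $(\xi_k)_{k\geq 1}$ into a single vector $\zeta\in H$ with $\xi_k\in M\zeta$ for all $k\,$, so that {\rm (ii)} again holds with $n=1\,$. The structural ingredient, obtained by iterating the defining decomposition $1\sim 1+1$ inside a properly infinite von Neumann algebra, is the existence of a sequence of partial isometries $v_k\in M$ with $v_k^{\,*}v_k=1$ and pairwise orthogonal range projections $e_k=v_kv_k^{\,*}\,$, so that $v_k^{\,*}v_j=\delta_{kj}\,$. Choosing positive scalars $c_k$ so that $\zeta:=\sum_{k\geq 1}c_k\,v_k\xi_k$ converges in norm (for instance $c_k=2^{-k}/(1+\|\xi_k\|)$), one obtains $v_k^{\,*}\zeta=c_k\xi_k$ and hence $\xi_k\in M\zeta\,$. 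The only real work lies in identifying the correct structural inputs --- the countable family of partial isometries inside a properly infinite algebra for case {\rm (2)}, and the vector representability of normal states on a standard von Neumann algebra (\`a la Haagerup) for case {\rm (3)} --- but none of this presents a serious obstacle.
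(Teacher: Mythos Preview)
Your proof is correct. Cases (1) and (3) follow the paper's line exactly: for (1) the cyclic vector immediately gives condition (ii) of Theorem \ref{commutant} with $n=1$, and for (3) the paper likewise invokes the Araki--Connes--Haagerup fact that in a standard representation every normal positive functional on $M'$ is a single vector functional, i.e.\ condition (i). Your extra remark that $M'$ is itself standard (via the same conjugation $J$) is a legitimate way to justify the citation, though the paper simply quotes the result directly.

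Your treatment of case (2) is genuinely different from the paper's. The paper handles (2) in parallel with (3), verifying condition (i) by citing the structural fact (\cite{S-Z}, Theorem 8.16) that the commutant of a \emph{properly infinite} von Neumann algebra already has the property that every normal positive functional is a vector functional. You instead verify condition (ii) directly and constructively: using the halving lemma to produce isometries $v_k\in M$ with pairwise orthogonal ranges, you compress the whole sequence $(\xi_k)$ into a single vector $\zeta=\sum_k c_k v_k\xi_k$ and recover $\xi_k=c_k^{-1}v_k^{\,*}\zeta\in M\zeta$. Your argument is more elementary and self-contained --- it avoids the cited predual result entirely and stays at the level of Murray--von Neumann equivalence --- whereas the paper's approach is more uniform, treating (2) and (3) by the same mechanism (condition (i) via Corollary \ref{commutant-by-support}). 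Both routes are short; yours trades a literature citation for a two-line construction.
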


\proof
(1) is an immediate consequence of Theorem \ref{commutant}.

(2) and (3) follow from Corollary \ref{commutant-by-support} because every normal
positive linear form on the commutant of a properly infinite von Neumann algebra
is of the form $\omega_\xi{\!\! '}$ for some $\xi\in H$ (see e.g. \cite{S-Z}, Theorem 8.16)
and standard von Neumann algebras have the same property (see \cite{A},
Theorem 6, \cite{Co2}, Th\'eor\`eme 2.7, \cite{Haa1}, Lemma 2.10).

\hfill $\square\quad$
\medskip

\begin{remark}
With $X=H$, $\Phi={\rm id}$ and $T_1=T_2$, statement {\rm (iii)} in Theorem
\ref{commutant} reduces to the automatic continuity of any additive map
$T :  H\longrightarrow H$ satisfying
$$
Tx=x\;\!T\, ,\qquad x\in M\, .
$$
Therefore Corollary \ref{commutant-cases} can be used to obtain
an alternative proof of Corollary \ref{Griffin-standard}.
We point out the fact that the more general situation in Theorem \ref{commutant}
may not be treated by the method of \cite{Gr2}.

\end{remark}

\medskip
\section{Some applications}
\label{applx}
\bigskip

The present section is devoted to provide some applications of the previous result (cf. Theorem
\ref{commutant}) on the boundedness of intertwining operators. 

We first start with the algebra homomorphism $x\mapsto TxT^{-1}$ implemented in a canonical
way by an invertible, a priori non necessarily bounded, linear operator.

\begin{proposition}\label{Implementation}
Let $M$ be a von Neumann algebra on a Hilbert space $H$, and $T$ a bijective
linear map from $H$ onto a Banach space $X$ such that $\big\{ TxT^{-1}\,; x\in M\big\}$
is a closed subalgebra of the Banach algebra $B(X)$ of all bounded linear operators
on $X$. Assuming that $M$ satisfies one of the conditions of Corollary \ref{commutant-cases},
the operator $T$ must be necessarily bounded.
\end{proposition}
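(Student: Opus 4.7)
The plan is to apply Theorem \ref{commutant}(iii) with $T_1 = T_2 = T$, taking as intertwining map the canonical algebra homomorphism $\Phi : M \to B(X)$ defined by $\Phi(x) := TxT^{-1}$. The intertwining relation $T\,x = \Phi(x)\,T$, $x \in M$, is then immediate from the definition, so the only thing to check before invoking (iii) is that $\Phi$ is bounded; note that $\Phi$ is automatically linear, injective (since $T$ is bijective) and multiplicative, and by hypothesis its image $\Phi(M)$ is a closed subalgebra of $B(X)$, hence a Banach algebra.

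The boundedness of $\Phi$ is the main step and, I expect, the principal obstacle. Here I would invoke Johnson's classical automatic continuity theorem: every surjective algebra homomorphism from a Banach algebra onto a semi-simple Banach algebra is continuous. The verification that $\Phi(M)$ is semi-simple goes by noting that the Jacobson radical is a purely algebraic (ring-theoretic) invariant; since $M$ is a $C^*$-algebra its Jacobson radical is zero, and because $\Phi$ is an algebra isomorphism of $M$ onto $\Phi(M)$, the Jacobson radical of the Banach algebra $\Phi(M)$ vanishes as well. Johnson's theorem then yields the continuity of $\Phi : M \to \Phi(M)$, and hence the boundedness of $\Phi : M \to B(X)$. (Equivalently, one could invoke Johnson's uniqueness-of-norm theorem for semi-simple Banach algebras, applied to the two complete submultiplicative norms on $\Phi(M)$, namely the one inherited from $B(X)$ and the one transported from $M$ via $\Phi^{-1}$.)

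With $\Phi$ now bounded and complex (in particular real) linear, and with $M$ satisfying one of the conditions (1)--(3) of Corollary \ref{commutant-cases}, Theorem \ref{commutant}(iii) applies with $T_1 = T_2 = T$ and gives that $T$ is bounded. Note that since $\Phi(1)$ is the identity of $X$, the intersection $\bigcap_{x \in M} \mathrm{Ker}\,\Phi(x)$ is trivial, so the quotient space in the statement of (iii) collapses to $X$ itself and the two conclusions about $T_1$ and $\widehat{T_2}$ merge into the single assertion that $T$ is bounded.
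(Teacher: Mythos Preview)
Your proof is correct and follows essentially the same route as the paper: define $\Phi(x)=TxT^{-1}$, use that $\Phi(M)$ is a closed (hence Banach) subalgebra of $B(X)$ which is semi-simple because semi-simplicity is an algebraic invariant inherited from the $C^*$-algebra $M$, invoke Johnson's uniqueness-of-norm theorem to get $\Phi$ bounded, and then apply Corollary~\ref{commutant-cases}. Your additional remark that $\Phi(1)=1_X$ forces $\bigcap_{x\in M}\mathrm{Ker}\,\Phi(x)=\{0\}$ is a nice clarification, though in fact the conclusion about $T_1$ in Theorem~\ref{commutant}(iii) already gives the boundedness of $T$ directly.
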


\begin{proof}
Let us denote by $\Phi$ the injective algebra homomorphism
\smallskip

\centerline{$M\ni x\longmapsto TxT^{-1}\in B(X)\, .$}
\smallskip

\noindent By our assumption $\Phi (M)$ is a closed, hence complete subalgebra of
$B(X)$, and $\Phi$ is an algebra isomorphism of $M$ onto it. Since the von Neumann
algebra $M$ is semi-simple and semi-simplicity is an algebraic invariant (see e.g.
\cite{Bon-Du}, Chapter III, $\S$ 24, Definition 13), $\Phi (M)$ is a semi-simple algebra
endowed with the two complete norms:
\smallskip

\centerline{$\Phi (x)\longmapsto \|\Phi (x)\|$ and $\Phi (x)\longmapsto \| x\|\, .$}
\smallskip

\noindent By a classical theorem of B. E. Johnson (\cite{Jo}, see also \cite{Bon-Du},
Chapter III, $\S$ 25, Theorem 9, a short proof was done in \cite{Au1}), these norms
should be equivalent and therefore the map $\Phi : M\longrightarrow B(X)$ is bounded.

Applying now Corollary \ref{commutant-cases}, we end the proof.
\end{proof}

Now let $\varphi$ be a n.s.f. weight on a $W^*$-algebra $M$. We consider the left ideal
\smallskip

\centerline{${\mathfrak N}_\varphi =\{ x\in M\, ;\, \varphi (x^*x)<+\infty\}$}
\smallskip

\noindent and denote by $\pi_{\varphi} : M\longrightarrow B(H_{\varphi})$ the associated
GNS representation. $x_\varphi$ will stay for $x\in{\mathfrak N}_\varphi$ considered an
element of the Hilbert space $H_\varphi$ and we shall discuss the existence of an
invariant algebraic complement of ${\mathfrak N}_\varphi$ in $H_\varphi\,$.

For $\tau$ a n.s.f. trace and $A$ a (possibly unbounded) positive, self-adjoint linear
operator in $H_\tau\,$, affiliated with $\pi_\tau (M)\,$, we shall use the notation of
Pedersen-Takesaki $\tau (A\,\cdot\, )$ for the normal, semi-finite weight defined by
\[
\begin{split}
\tau (A\;\! b):=\;&\lim\limits_{k\to\infty}\tau\Big( \pi_\tau^{-1}\big( A^{1/2}
\chi_{\substack{ {} \\ [0,k]}}(A)\big)\cdot b\cdot\pi_\tau^{-1}\big( A^{1/2}
\chi_{\substack{ {} \\ [0,k]}}(A)\big)\Big) \\
=\;&\lim\limits_{k\to\infty}\tau\Big( b^{1/2}\pi_\tau^{-1}\big( A\;\!\chi_{\substack{ {} \\ [0,k]}}(A)\big)
b^{1/2}\Big)\, ,\qquad 0\leq b\in M\, ,
\end{split}
\]
where $\displaystyle \chi_{\substack{ {} \\ [0,k]}}$ stands for the
characteristic function of $[0,k]$ (see \cite{P-T}, Paragraph 4).
The weight $\tau (A\,\cdot\, )$ is faithful if and only if $A$ is injective
and $\tau (A\,\cdot\, )\geq\lambda\;\!\tau$ for some scalar $\lambda >0$
if and only if $A\geq\lambda\,$.

\begin{proposition}\label{complement}
Let $M\neq\{ 0\}$ be a $W^*$-algebra equipped with a n.s.f. weight $\varphi\,$, and
$\mathfrak X$ a left ideal of $M$, contained in ${\mathfrak N}_\varphi$ and such that
$X=\{ x_\varphi\, ;\, x\in\mathfrak X\}$ is dense in $H_\varphi\,$. Then the
following statements are equivalent $:$
\begin{itemize}
\item[(i)] $X$ admits an algebraic complement in $H_\varphi$ which is
invariant under the action of $\pi_\varphi (M)\,$.
\item[(ii)] $\mathfrak X=\mathfrak N_{\varphi}$ and $\{ x_\varphi\, ;\, x\in
\mathfrak N_{\varphi}\} =H_{\varphi}\,$.
\item[(iii)] $\mathfrak X=\mathfrak N_{\varphi}$ and
$\displaystyle \sup\limits_{\substack{ 0\leq b\in M \\ b\neq 0}}\frac{\| b\|}{\;\!\varphi (b)\;\!}
<+\infty\,$.
\item[(iv)] $\mathfrak X=\mathfrak N_{\varphi}$ and the reduced $W^*$-algebra
$eMe$ is finite-dimensional for every projection $e\in M$ with $\varphi (e)<+\infty\,$.
\item[(v)] $\mathfrak X=\mathfrak N_{\varphi}\,$, $M$ is the direct product of a family
$\big( M_\iota \big)_{\iota\in I}$ of type $I$ factors and, denoting by $\tau_\iota$ the
canonical trace on $M_\iota$ $($i.e. the n.s.f. trace which is equal to $1$ in every
minimal projection$)$, for some scalar $\lambda >0$ we have
\begin{equation*}
\varphi (b)\geq\lambda \Big( \bigoplus\limits_{\iota\in I}\tau_\iota\Big) (b)\, ,\qquad
0\leq b\in M\;\! .
\end{equation*}
\end{itemize}
\end{proposition}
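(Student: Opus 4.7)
The plan is to prove the equivalences by the cycle (i)$\Rightarrow$(ii)$\Rightarrow$(iii)$\Rightarrow$(iv)$\Rightarrow$(v)$\Rightarrow$(i), feeding Corollary~\ref{commutant-cases} into the first step, the closed graph theorem into the second, and the structure and modular theory of type~$I$ algebras into the last two.

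For (i)$\Rightarrow$(ii), I would let $Y$ be a $\pi_\varphi(M)$-invariant algebraic complement of $X$ and let $P:H_\varphi\to H_\varphi$ be the algebraic projection onto $X$ along $Y$. Invariance of both subspaces makes $P$ commute with every operator in $\pi_\varphi(M)$, and the standardness of $\pi_\varphi(M)$ lets Corollary~\ref{commutant-cases}(3) force $P$ to be bounded. A bounded projection has closed range, so the dense subspace $X=PH_\varphi$ must equal $H_\varphi$, and faithfulness of $\varphi$ promotes this immediately to $\mathfrak X=\mathfrak N_\varphi$. For (ii)$\Rightarrow$(iii), I would observe that the surjectivity of $x\mapsto x_\varphi$ makes $(\mathfrak N_\varphi,\|\cdot\|_\varphi)$ a Banach space (writing $\|x\|_\varphi:=\varphi(x^*x)^{1/2}$), and then apply the closed graph theorem to the identity embedding $(\mathfrak N_\varphi,\|\cdot\|_\varphi)\hookrightarrow(M,\|\cdot\|)$. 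Closedness of the graph can be checked by testing against $z_\varphi$ for $z$ ranging through a dense Tomita subalgebra: if $x_n\to x$ in $\|\cdot\|_\varphi$ and $x_n\to y$ in operator norm, then $(x_nz)_\varphi=\pi_\varphi(x_n)z_\varphi\to(yz)_\varphi$ by operator-norm convergence, while $(x_nz)_\varphi\to(xz)_\varphi$ follows from the boundedness of the right action of $z$ on $H_\varphi$; faithfulness of $\varphi$ then gives $x=y$. The resulting inequality $\|x\|\leq C\|x\|_\varphi$ applied to $x=b^{1/2}$, $b\geq 0$, is exactly (iii). The step (iii)$\Rightarrow$(iv) is elementary: when $\varphi(e)<\infty$ the restriction $\psi:=\varphi|_{eMe}$ is a bounded positive functional satisfying $\psi\geq c\|\cdot\|$ on positives, so every orthogonal family of nonzero projections in $eMe$ has cardinality at most $\psi(e)/c$, forcing $eMe$ to be finite-dimensional.

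For (iv)$\Rightarrow$(v), the structural half is that every minimal projection in a finite-dimensional corner $eMe$ is automatically minimal in $M$ (because $pMp\subseteq p(eMe)p=\mathbb{C}p$), and semi-finiteness of $\varphi$ makes these minimal projections cofinal below every nonzero projection of $M$; hence $M$ is of type~$I$ with atomic centre, $M=\bigoplus_\iota B(K_\iota)$. For the quantitative half, write $\varphi|_{M_\iota}=\tau_\iota(\rho_\iota\,\cdot\,)$ via Pedersen--Takesaki for some positive injective self-adjoint $\rho_\iota$ on $K_\iota$. An infimum $0$ in the spectrum of some $\rho_\iota$ would allow a telescoping of spectral projections of $\rho_\iota$ to produce an infinite-rank projection of finite $\varphi$-weight inside $M_\iota$, with infinite-dimensional corner, violating (iv); so $\rho_\iota\geq c_\iota>0$ for each $\iota$. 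The analogous orthogonal-sum construction carried out across the components then rules out $\inf_\iota c_\iota=0$, and setting $\lambda:=\inf_\iota c_\iota>0$ yields $\varphi\geq\lambda\bigoplus_\iota\tau_\iota$, completing (v).

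Finally (v)$\Rightarrow$(i) closes the cycle: the estimate $\rho\geq\lambda$ makes $\rho^{-1/2}$ bounded of norm $\leq 1/\sqrt\lambda$, and a direct computation shows that a $\|\cdot\|_\varphi$-Cauchy sequence $(x_n)\subset\mathfrak N_\varphi$ pushes forward to a Hilbert--Schmidt Cauchy sequence $(x_n\rho^{1/2})$ whose limit $y$ yields $x:=y\rho^{-1/2}\in\mathfrak N_\varphi$ with $x_n\to x$ in $\|\cdot\|_\varphi$; hence $\mathfrak N_\varphi$ is complete in $\|\cdot\|_\varphi$, $X=H_\varphi$, and $Y=\{0\}$ is a trivial invariant algebraic complement. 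The hard part will be the quantitative half of (iv)$\Rightarrow$(v), where the purely qualitative hypothesis on finite-dimensionality of every finite-trace corner must be upgraded to a uniform spectral bound on the densities $\rho_\iota$ by running the orthogonal-sum construction simultaneously within each factor and across factor components.
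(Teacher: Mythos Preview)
Your cycle is correct and largely parallels the paper's argument, but there is one genuine divergence worth flagging. For the passage to (iv) the paper does \emph{not} go through (iii); instead it proves (ii)$\Rightarrow$(iv) by a second application of the intertwining--operator machinery: given a projection $e$ with $\varphi(e)<\infty$ and an arbitrary state $\psi$ on $eMe$, it builds the operator $T:y_\varphi\mapsto\pi_{\widetilde\psi}(y)\xi_{\widetilde\psi}$, observes that $T$ intertwines $\pi_\varphi$ with $\pi_{\widetilde\psi}$, invokes Corollary~\ref{commutant-cases} to get $T$ bounded, and deduces that $\psi$ is normal; finite-dimensionality of $eMe$ then follows from reflexivity. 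Your route (iii)$\Rightarrow$(iv) is considerably more elementary: the inequality $\varphi(b)\geq c\|b\|$ caps the size of any orthogonal family of nonzero projections in $eMe$ by $\varphi(e)/c$, and a von Neumann algebra with no infinite orthogonal family is finite-dimensional. This sidesteps the second use of automatic continuity entirely and is a cleaner argument; the paper's approach, on the other hand, is thematically tied to the main results and shows that (iv) already follows from the weaker surjectivity hypothesis (ii) without passing through the quantitative bound (iii).

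The remaining steps match the paper in substance. Your (i)$\Rightarrow$(ii) and (ii)$\Rightarrow$(iii) are the paper's arguments (the latter's closed-graph verification via the right action is exactly what you sketch with the Tomita algebra). Your (iv)$\Rightarrow$(v) is the paper's structural-plus-spectral argument, only organized factor-by-factor and then across factors rather than globally on $H_\tau$. For the closing step you prove (v)$\Rightarrow$(i) by a direct Hilbert--Schmidt completion showing $\{x_\varphi:x\in\mathfrak N_\varphi\}=H_\varphi$, whereas the paper closes with (v)$\Rightarrow$(iii) via $\tau(b)\geq\|b\|$; both are short and equivalent in strength.
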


\begin{proof}
The implication $(ii)\Longrightarrow (i)$ is trivial.

For $(i)\Longrightarrow (ii)$ let us assume that there exists a $\pi_\varphi (M)$-invariant
linear subspace $Y$ of $H_\varphi$ such that $H_\varphi =X\oplus Y$, where "$\oplus$"
stands for inner algebraic direct sum. Let $P$ denote the corresponding projection
operator: $P : H_\varphi\longrightarrow H_\varphi$ is the linear operator defined by
$P(\xi +\eta )=\xi$ where $\xi\in X,\eta\in Y$. It is easily seen that $P$ satisfies the
commutation condition
\medskip

\centerline{$P\pi_\varphi (x) =\pi_\varphi (x) P\, ,\qquad x\in M$}
\smallskip

\noindent and therefore, by Corollary \ref{Griffin-standard} (or by Corollary \ref{commutant-cases}),
we infer that $P$ is a bounded operator. Consequently, the dense  linear subspace
$X=P H_\varphi$ of $H_\varphi$ is also closed, hence $X=H_\varphi\,$. Now
\smallskip

\centerline{$H_\varphi =X\subset \{ x_\varphi\, ;\, x\in\mathfrak N_{\varphi}\}\subset H_\varphi$}
\smallskip

\noindent means $X=\{ x_\varphi\, ;\, x\in\mathfrak N_{\varphi}\}\Longleftrightarrow \mathfrak X=
\mathfrak N_{\varphi}$ and $\{ x_\varphi\, ;\, x\in\mathfrak N_{\varphi}\} =H_\varphi\,$, that is
(ii).

For the proof of equivalence $(ii)\Longleftrightarrow (iii)$ we shall use the fact that
\begin{equation}\label{closed-graph}
\begin{split}
&\big\{ (\xi ,x)\, ;\, x\in\mathfrak N_{\varphi}\;\! ,\xi =x_\varphi\big\}\text{ is a closed subset of }H_\varphi
\times M \\
&\text{ with respect to the product of the norm-topologies}
\end{split}
\end{equation}
(in other words the linear operator $\mathfrak N_{\varphi}\ni x\longmapsto x_\varphi\in
H_\varphi$ is closed with respect to the norm-topologies; cf. with the Lebesgue continuity
property considered in \cite{Z1}, Section 2 and \cite{A-Z}, Section 2.4).

To verify (\ref{closed-graph}) let us assume that $\xi\in H_\varphi\, ,x\in M$ are such that,
for some sequence $(x_k)_{k\geq 1}$ in $\mathfrak N_{\varphi}\,$, we have
$\|\xi -(x_k)_\varphi\|\longrightarrow 0$ and $\| x-x_k\|\longrightarrow 0\,$. Then by the
weak$^*$ lower semicontinuity of the normal weight $\varphi\,$, we have
\medskip

\centerline{$\displaystyle \varphi (x^*x)\leq \varliminf\limits_{k\to\infty}\varphi (x_k^{\;\! *}x_k) =
\varliminf\limits_{k\to\infty}\| (x_k)_\varphi\|^2 =\|\xi\|^2<+\infty\, .$}
\smallskip

\noindent In particular $x\in\mathfrak N_{\varphi}$, and for every $y\in\mathfrak N_{\varphi}$
we obtain :
\[
\begin{split}
J_\varphi\pi_\varphi (y) J_\varphi x_\varphi =\;&\pi_\varphi (x)J_\varphi y_\varphi =
\lim\limits_{k\to\infty} \pi_\varphi (x_k)J_\varphi y_\varphi \\
=\;&\lim\limits_{k\to\infty} J_\varphi\pi_\varphi (y) J_\varphi (x_k)_\varphi =
J_\varphi\pi_\varphi (y) J_\varphi\;\! \xi\, .
\end{split}
\]
Taking into account that the identity operator on $H_\varphi$ belongs to the weak operator closure of
$J_\varphi\pi_\varphi (\mathfrak N_{\varphi}) J_\varphi\,$, we conclude that $x_\varphi =\xi\,$.

Now $(ii)\Longrightarrow (iii)$ follows by a simple application of the closed graph theorem.
Indeed, if $\{ x_\varphi\, ;\, x\in\mathfrak N_{\varphi}\} =H_{\varphi}$ then we can consider
the everywhere defined linear operator
\smallskip

\centerline{$H_{\varphi}=\{ x_\varphi\, ;\, x\in\mathfrak N_{\varphi}\} \ni x_\varphi
\longmapsto x\in M$}
\smallskip

\noindent whose graph is closed by (\ref{closed-graph}). According to the closed
graph theorem, there exists a constant $c\geq 0$ such that $\| x\|\leq c\;\!\varphi (x^*x)^{1/2}$
holds true for all $x\in\mathfrak N_{\varphi}\,$. Consequently,
\medskip

\centerline{$\displaystyle \frac{\| b\|}{\;\!\varphi (b)\;\!} =
\bigg( \frac{\| b^{1/2}\|}{\;\!\varphi \big( (b^{1/2})^*b^{1/2}\big)^{1/2}\;\!}\bigg)^{\! 2}\leq c^2\, ,
\qquad 0\leq b\in M ,0<\varphi (b)<+\infty\, .$}

\noindent Actually, the inequality $\displaystyle \frac{\| b\|}{\;\!\varphi (b)\;\!} \leq c^2$ holds
for any non-zero $0\leq b\in M$ because for $\varphi (b)=+\infty$ we have
$\displaystyle \frac{\| b\|}{\;\!\varphi (b)\;\!} =\frac{\| b\|}{\;\! +\infty\;\!}=0\leq c^2$.

For the converse implication let us assume that $(iii)$ holds and put
\medskip

\centerline{$\displaystyle c:=\bigg( \sup\limits_{\substack{ 0\leq b\in M \\ b\neq 0}}
\frac{\| b\|}{\;\!\varphi (b)\;\!}\bigg)^{\! 1/2}<+\infty\, .$}
\smallskip

\noindent Then
\smallskip

\centerline{$\displaystyle \| x\| =\| x^*x\|^{1/2}\leq\Big( c^2\varphi (x^*x)\Big)^{\! 1/2}
=c\;\!\| x_\varphi\|\, ,\qquad x\in\mathfrak N_{\varphi}\, .$}
\medskip

\noindent Let now $\xi\in H_\varphi$ be arbitrary and $(x_k)_{k\geq 1}$ a sequence in
$\mathfrak N_{\varphi}$ with $\|\xi -(x_k)_\varphi\|\longrightarrow 0\,$. By the above inequality
$(x_k)_{k\geq 1}$ is a Cauchy sequence with respect to the norm of $M$, so it is
norm-convergent to some $x\in M$. According to (\ref{closed-graph}), we conclude that
$x\in\mathfrak N_{\varphi}$ and $\xi =x_\varphi\,$. This shows that $(ii)$ holds.

Now we have the equivalences $(i)\Longleftrightarrow (ii)\Longleftrightarrow (iii)\,$.
To complete the proof we shall show that $(ii)\Longrightarrow (iv)\Longrightarrow (v)
\Longrightarrow (iii)\,$.

For $(ii)\Longrightarrow (iv)$ let us assume $(ii)$ and let $e\in M$ be any
projection satisfying $\varphi (e)<+\infty\,$.
Since reflexive $C^*$-algebras are finite-dimensional (see the proof of Proposition 2 in
\cite{Sak1} or \cite{KR}, Chapter 10, Exercise 10.5.17 (iii)), it is enough to prove that
every state $\psi$ on $eMe$ is normal.

Let $\widetilde{\psi}$ be the extension of $\psi$ to a state on $M$ defined by
$\widetilde{\psi}(x):=\psi (exe)\,$. Let also denote by $\pi_{\widetilde{\psi}} : M\longrightarrow
B(H_{\widetilde{\psi}})$ the associated GNS representation and by $\xi_{\widetilde{\psi}}$
its canonical cyclic vector, so that $\widetilde{\psi}(x)=\big(\pi_{\widetilde{\psi}}(x)\xi_{\widetilde{\psi}}
\big|\xi_{\widetilde{\psi}}\big)\,$.

The everywhere defined linear operator
\smallskip

\centerline{$T : H_{\varphi}=\{ y_\varphi\, ;\, y\in\mathfrak N_{\varphi}\} \ni y_\varphi
\longmapsto \pi_{\widetilde{\psi}}(y)\xi_{\widetilde{\psi}}\in H_{\widetilde{\psi}}$}
\smallskip

\noindent satisfies the intertwining condition
\smallskip

\centerline{$T \pi_\varphi (x) =\pi_{\widetilde{\psi}}(x)\;\! T\, ,\qquad x\in M .$}
\smallskip

\noindent Indeed, we have for every $x\in M$ and $y\in\mathfrak N_{\varphi}$,
\smallskip

\centerline{$T \pi_\varphi (x)\;\! y_\varphi =T (xy)_\varphi =\pi_{\widetilde{\psi}}(xy)
\xi_{\widetilde{\psi}} =\pi_{\widetilde{\psi}}(x)\big(\pi_{\widetilde{\psi}}(y)
\xi_{\widetilde{\psi}}\big) =\pi_{\widetilde{\psi}}(x)\;\! T\;\! y_\varphi\, .$}
\smallskip

\noindent Applying Corollary \ref{commutant-cases} we infer that $T$ is bounded.

Now we are ready to prove the normalness of $\psi\,$. For let
$\big( b_\kappa\big)_{\kappa\in K}$ be a bounded, increasing net of positive
elements of $eMe$ and $\displaystyle b:=\sup\limits_{\kappa\in K}b_\kappa\,$.
By the normality of $\pi_\varphi$ we have
\smallskip

\centerline{$(b_\kappa)_\varphi =(b_\kappa e)_\varphi =\pi_\varphi (b_\kappa )
e_\varphi \longrightarrow \pi_\varphi (b) e_\varphi =(b\;\! e)_\varphi =b_\varphi$}
\medskip

\noindent and the boundedness of $T$ yields
\medskip

\centerline{$\pi_{\widetilde{\psi}}(b_\kappa)\xi_{\widetilde{\psi}} =T(b_\kappa )_\varphi 
\longrightarrow T\;\! b_\varphi =\pi_{\widetilde{\psi}}(b_\kappa)\xi_{\widetilde{\psi}}\, .$}
\smallskip

\noindent Consequently,
\medskip

\centerline{$\psi (b_\kappa )=\widetilde{\psi}(b_\kappa )=
\big(\pi_{\widetilde{\psi}}(b_\kappa )\xi_{\widetilde{\psi}}\big|\xi_{\widetilde{\psi}}\big)
\longrightarrow \big(\pi_{\widetilde{\psi}}(b)\xi_{\widetilde{\psi}}\big|\xi_{\widetilde{\psi}}\big)
=\widetilde{\psi}(b)=\psi (b)$}
\smallskip

\noindent and we are done.

For $(iv)\Longrightarrow (v)$ let us assume that statement $(iv)$ holds.

First we show that then $M$ is the direct product of a family $\big( M_\iota \big)_{\iota\in I}$
of type $I$ factors. The proof consists in a straightforward application of the Zorn Lemma
once we show that any non-zero central projection $q\in Z(M)$ majorizes some non-zero
central projection $p$ such that $Mp$ is a type $I$ factor.

For we notice that the restriction of $\varphi$ to $Mq$ is semi-finite, hence there exists
a non-zero projection $e_o\in M ,e_o\leq q\,$, having finite weight $\varphi (e_o)<+\infty\,$.
According to $(iv)$ the reduced $W^*$-algebra $e_oMe_o$ is finite-dimensional, so
it contains a minimal projection $0\neq e\leq e_o\leq q\,$.

According to the Zorn Lemma, there exists a maximal set $\mathcal F\supset\{ e\}$ of
mutually orthogonal projections in $M$, all equivalent to $e\,$. Since all projections in
$\mathcal F$ have the same central support $z(e)\leq q\,$,
$\displaystyle \sum\limits_{f\in\mathcal F} f$ is less or equal than $z(e)\,$. We claim that actually
$\displaystyle \sum\limits_{f\in\mathcal F} f =z(e)\,$.
Indeed, by the comparison theorem there exists a central projection $q_o\leq z(e)$
such that
\begin{equation}\label{comparison}
\begin{split}
eq_o\prec\;&\bigg( z(e) -\sum\limits_{f\in\mathcal F} f\bigg) q_o\, , \\
e\big( z(e)-q_o\big)\succ\;&\bigg( z(e) -\sum\limits_{f\in\mathcal F} f\bigg) \big( z(e)-q_o\big)\, .
\end{split}
\end{equation}
If $q_o$ were non-zero, by the minimality of $e$ the projection $0\neq eq_o\leq e$
would be equal to $e$ and the first above relation in (\ref{comparison}) would imply
\medskip

\centerline{$\displaystyle e\prec \bigg( z(e) -\sum\limits_{f\in\mathcal F} f\bigg) q_o
\leq z(e) -\sum\limits_{f\in\mathcal F} f\, ,$}
\smallskip

\noindent contradicting the maximality of $\mathcal F\,$. Thus $q_o=0$ and therefore
the second relation in (\ref{comparison}) yields
\medskip

\centerline{$\displaystyle e\succ z(e) -\sum\limits_{f\in\mathcal F} f\, .$}
\smallskip

\noindent If $\displaystyle z(e) -\sum\limits_{f\in\mathcal F} f$ were $\neq 0$, then
it would be equivalent to a non-zero subprojection of $e\,$, which by the minimality
of $e$ should be equal to $e\,$. But this would contradict the maximality of $\mathcal F\,$.
Consequently $\displaystyle z(e) -\sum\limits_{f\in\mathcal F} f=0\,$.

By the above, $p:=z(e)$ is a non-zero central subprojection of $q\,$, equal to the sum
of the mutually orthogonal, equivalent, minimal projections belonging to $\mathcal F$.
Then $Mp$ is $*$-isomorphic to the von Neumann algebra of all bounded linear
operators on a Hilbert space of dimension ${\rm card}(\mathcal F)$ (see e.g. \cite{S-Z},
Theorem 4.22), and thus is a type $I$ factor.

Now, knowing that $M$ is the direct product of a family $\big( M_\iota \big)_{\iota\in I}$
of type $I$ factors, we can consider on each $M_\iota$ the canonical trace $\tau_\iota\,$,
and then on $M$ the n.s.f. trace $\displaystyle \tau :=\bigoplus\limits_{\iota\in I}\tau_\iota\,$.
By the Radon-Nikodym type theorem of Pedersen and Takesaki (\cite{P-T}, Theorem 5.12)
there exists a positive, self-adjoint linear operator in $H_\tau\,$, affiliated with $\pi_\tau (M)\,$,
such that $\varphi =\tau (A\,\cdot\, )\,$. Using again $(iv)\,$, we shall verify that $0$ does not
belong to the spectrum of $A\,$, that is $A\geq\lambda$ for some scalar $\lambda >0\,$.
Then it will follow $\varphi =\tau (A\,\cdot\, )\geq\lambda\;\!\tau$ and we can conclude that
$(v)$ holds.

To this end let us assume the contrary, that is that $0$ belongs to the spectrum of $A\,$.
Since $\varphi =\tau (A\,\cdot\, )$ is faithful, $A$ must be injective, so $0$ cannot be an
isolated point of the spectrum of $A\,$. Therefore we can find real numbers
\medskip

\centerline{$\alpha_1 >\beta_1 >\alpha_2 >\beta_2 >\, . . .\, >0\, ,\qquad
\alpha_k<2^{-k}\text{ for all }k\geq 1\, ,$}
\smallskip

\noindent such that every interval $(\alpha_k ,\beta_k)$ intersects the spactrum of $A\,$.
Thus the mutually orthogonal spectral projections $\chi_{\substack{ {} \\ (\alpha_k,\beta_k)}}(A)
\in\pi_\tau (M)$ are all non-zero. Choose for every $k\geq 1$ some minimal projection
$f_k\in M$ less or equal than $\pi_\tau^{-1}\big(\chi_{\substack{ {} \\ (\alpha_k,\beta_k)}}(A)\big)$
and put $\displaystyle f_o:=\sum\limits_{k\geq 1}f_k\,$.

Then $\varphi (f_o)<+\infty\,$. Indeed, we have for every $k\geq 1$
\[
\begin{split}
\varphi (f_k) =\;&\varphi\Big( \pi_\tau^{-1}\big(\chi_{\substack{ {} \\ (\alpha_k,\beta_k)}}(A)\big)
f_k\Big) =\tau\Big( f_k\pi_\tau^{-1}\big( A\;\!\chi_{\substack{ {} \\ (\alpha_k,\beta_k)}}(A)\big) f_k\Big) \\
\leq\;&\alpha_k\;\! \tau\Big( f_k\pi_\tau^{-1}\big(\chi_{\substack{ {} \\ (\alpha_k,\beta_k)}}(A)\big) f_k\Big)
=\alpha_k\;\! \tau\big( f_k\big) =\alpha_k \\
<\;& 2^{-k}\, ,
\end{split}
\]
hence, by the normalness of $\varphi\,$,
\bigskip

\centerline{$\displaystyle \varphi (f_o)=\sum\limits_{k\geq 1}\varphi (f_k)\leq\sum\limits_{k\geq 1}
2^{-k} =1<+\infty\, .$}
\smallskip

\noindent By $(iv)$ it should follow that the reduced algebra $f_oMf_o$ is finite-dimensional,
But $f_oMf_o$ contains the infinitely many mutually orthogonal non-zero projections $f_k$
and this contradiction shows that $0$ cannot belong to the spectrum of $A\,$.

Finally, for $(v)\Longrightarrow (iii)$ let us assume that $(v)$ holds.
Since the Hilbert-Schmidt norm majorizes the operator norm, we have
\medskip

\centerline{$\tau_\iota (x_\iota^{\;\! *}x_\iota )^{1/2}\geq \| x_\iota\|\, ,\qquad x_\iota\in
M_\iota\;\! , \iota\in I .$}
\medskip

\noindent Denoting $\displaystyle \tau :=\bigoplus\limits_{\iota\in I}\tau_\iota\,$, it follows
for every $\displaystyle x=\prod\limits_{\iota\in I} x_\iota\in M$
\smallskip

\centerline{$\displaystyle \tau (x^*x) =\sum\limits_{\iota\in I}\tau_\iota (x_\iota^{\;\! *}x_\iota )
\geq\sum\limits_{\iota\in I}\| x_\iota\|^2\geq\sup\limits_{\iota\in I}\| x_\iota\|^2 =\| x\|^2
=\| x^*x\|\, .$}
\smallskip

\noindent Thus we obtain for every non-zero $0\leq b\in M$ with $\varphi (b)<+\infty\,$:
\medskip

\centerline{$\displaystyle \varphi (b)\geq \lambda\;\! \tau (b)\geq\lambda\;\! \| b\|
\Longleftrightarrow \frac{\| b\|}{\;\!\varphi (b)\;\!}\leq \lambda^{-1}\, .$}
\smallskip

\noindent Consequently, $(iii)$ holds true.
\smallskip

\end{proof}
\smallskip

For bounded functionals, Proposition \ref{complement} entails :

\begin{corollary}
Let $M\neq\{ 0\}$ be a $W^*$-algebra equiped with a faithful, normal state $\varphi\,$, and
$\mathfrak X$ a left ideal of $M$ such that $X=\{ x_\varphi\, ;\, x\in\mathfrak X\}$ is dense in
$H_\varphi\,$. Then the following statements are equivalent $:$
\begin{itemize}
\item[(i)] $X$ admits an algebraic complement in $H_\varphi$ which is
invariant under the action of $\pi_\varphi (M)\,$.
\item[(ii)] $\mathfrak X=\mathfrak N_{\varphi}$ and $\{ x_\varphi\, ;\, x\in
\mathfrak N_{\varphi}\} =H_{\varphi}\,$.
\item[(iii)] $\mathfrak X=\mathfrak N_{\varphi}$ and
$\displaystyle \sup\limits_{\substack{ 0\leq b\in M \\ b\neq 0}}\frac{\| b\|}{\;\!\varphi (b)\;\!}
<+\infty\,$.
\item[(iv)] $\mathfrak X=\mathfrak N_{\varphi}$ and $M$ is finite-dimensional.
\end{itemize}
\end{corollary}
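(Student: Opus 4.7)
The plan is to deduce this corollary directly from Proposition \ref{complement} by exploiting that $\varphi$ is a bounded positive functional. The key preliminary observation is that the estimate $\varphi(x^*x)\leq \|\varphi\|\,\|x\|^2$, valid because $\varphi$ is a state, forces $\mathfrak{N}_\varphi=M$. In particular, $1\in\mathfrak{N}_\varphi$, so $1$ is a projection with $\varphi(1)=\|\varphi\|<+\infty$.

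First I would notice that statements (i), (ii), (iii) of the corollary are literally the statements (i), (ii), (iii) of Proposition \ref{complement}, so their equivalence is immediate. The only new content to address is condition (iv) of the corollary, namely ``$M$ is finite-dimensional'', and its place in the chain.

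For the implication from condition (iv) of Proposition \ref{complement} to (iv) of the corollary, I would simply specialize the Proposition's condition to the projection $e=1\in M$: this projection has finite weight by the boundedness of $\varphi$, so the reduced algebra $1\cdot M\cdot 1 = M$ must be finite-dimensional. The converse is trivial, since if $M$ is finite-dimensional then every reduced algebra $eMe$ with $\varphi(e)<+\infty$ is automatically finite-dimensional, verifying condition (iv) of Proposition \ref{complement}.

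I do not anticipate any real obstacle here: the corollary is essentially a straightforward specialization of Proposition \ref{complement} in which the existence of a global finite-weight unit collapses the local finite-dimensionality condition ``$eMe$ is finite-dimensional for all projections $e$ with $\varphi(e)<+\infty$'' into the single global statement ``$M$ is finite-dimensional''. The only point worth stating carefully in the write-up is the observation $\mathfrak{N}_\varphi=M$, which both legitimates the appeal to the Proposition and drives the simplification of condition (iv).
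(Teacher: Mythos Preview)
Your proposal is correct and matches the paper's approach: the paper gives no explicit argument for the corollary beyond the box symbol, treating it as an immediate specialization of Proposition~\ref{complement}, which is exactly what you do. Your explicit observation that $\mathfrak{N}_\varphi=M$ (hence $1$ is a projection of finite weight, collapsing condition (iv) of the Proposition to finite-dimensionality of $M$) is the one point worth spelling out, and you have it right.
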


\hfill $\square\quad$

\medskip
\section{Appendix: Some examples}
\bigskip

In the present appendix we show that there exist linear and antilinear involutions
on a infinite dimensional Hilbert space, defined everywhere and unbounded.

Let $H$ be an infinite dimensional Hilbert space equipped with a Hamel basis 
$\big(\xi_\iota\big)_{\iota\in I}$. Then each $\xi\in H$ can be uniquely written as
\begin{equation*}
\xi =\sum_{\iota\in I}\lambda_\iota (\xi )\xi_\iota\,,
\end{equation*}
where all but finitely many coefficients $\lambda_\iota (\xi )$ vanish.
In other words, the set $F_{\xi}$ of all indices $\iota\in I$ with $\lambda_\iota (\xi)
\neq 0$ is finite.

The linear functionals $\big(\lambda_\iota\big)_{\iota\in I}$ are called the
{\it coordinate functionals} of the Hamel basis $\big(\xi_\iota\big)_{\iota\in I}$.
\begin{proposition}
\label {uunnoo}
For a fixed Hamel basis $\big(\lambda_\iota\big)_{\iota\in I}$, 
$\left|\{\iota\in I\mid\lambda_\iota\,\,{\rm continuous}\,\,\}\right|<+\infty$.
\end{proposition}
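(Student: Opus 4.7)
The plan is to argue by contradiction, exploiting the tension between the finite-support property of Hamel-basis expansions and the fact that a norm-continuous functional commutes with the limit of a norm-convergent series.

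Suppose that the set $J=\{\iota\in I\mid \lambda_\iota\text{ continuous}\}$ is infinite, and choose pairwise distinct indices $\iota_1,\iota_2,\ldots\in J$. The crucial construction is the norm-convergent series
$$\xi:=\sum_{k=1}^\infty \frac{1}{2^k\,\|\xi_{\iota_k}\|}\,\xi_{\iota_k},$$
whose $k$-th term has norm $2^{-k}$, so the series converges absolutely to some element $\xi\in H$ by completeness. Since $(\xi_\iota)_{\iota\in I}$ is a Hamel basis, $\xi$ admits a \emph{finite}-support expansion $\xi=\sum_{\iota\in F_\xi}\lambda_\iota(\xi)\xi_\iota$.

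On the other hand, fix $k\geq 1$. By norm-continuity of $\lambda_{\iota_k}$, applied to the partial sums of the series defining $\xi$, and by the biorthogonality relation $\lambda_{\iota_k}(\xi_{\iota_j})=\delta_{jk}$ built into any Hamel basis, we obtain
$$\lambda_{\iota_k}(\xi)=\lim_{N\to\infty}\sum_{j=1}^N\frac{1}{2^j\,\|\xi_{\iota_j}\|}\,\lambda_{\iota_k}(\xi_{\iota_j})=\frac{1}{2^k\,\|\xi_{\iota_k}\|}\neq 0.$$
Hence $\iota_k\in F_\xi$ for every $k\geq 1$, so $F_\xi$ contains the infinite set $\{\iota_k:k\geq 1\}$, contradicting the finiteness of $F_\xi$.

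I do not foresee any real obstacle in executing this plan: the only routine verification is that the $\iota_k$ may be taken pairwise distinct (immediate from $|J|=\infty$) and that the series converges absolutely (immediate from the $2^{-k}$ weights). Conceptually the argument is just the standard observation that no infinite-dimensional complete space can admit infinitely many continuous coordinate functionals for a Hamel basis, since Hamel expansions must truncate while norm limits need not.
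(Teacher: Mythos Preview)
Your proof is correct and follows essentially the same approach as the paper: both construct the norm-convergent series $\sum 2^{-k}\xi_{\iota_k}$ (you normalize in the series, the paper assumes unit vectors WLOG) and exploit the tension between the finite Hamel support of the limit and the continuity of the selected coordinate functionals. Your final step is in fact slightly more direct than the paper's---you compute $\lambda_{\iota_k}(\xi)\neq 0$ for every $k$ and conclude $F_\xi$ is infinite, whereas the paper picks a single $\iota_{n_0}\notin F_\xi$ and derives a contradiction via an auxiliary sequence $\eta_k\to 0$.
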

\proof
Suppose that $\big|\{\iota\in I\mid\lambda_\iota\,\,\text{continuous}\,\,\}\big|=+\infty$.
Without loss of generality, we can suppose that the Hamel basis under consideration
is made of unit vectors. Choose a sequence $\big(\xi_{\iota_n}\big)_{n\geq 1}
\subset\big(\xi_\iota\big)_{\iota\in I}$ such that all the corresponding coordinate
functions $\big(\lambda_{\iota_n}\big)_{n\geq 1}$ are continuous. The sum
\medskip

\centerline{$\displaystyle \xi :=\sum_{n=1}^{+\infty}\frac1{2^n}\xi_{\iota_n}$}
\smallskip

\noindent is a well defined element of $H$
and
\medskip

\centerline{$\displaystyle \sum_{n=1}^{+\infty}\frac1{2^n}\xi_{\iota_n}=\xi =
\sum_{\iota\in F_{\xi}}\lambda_\iota(\xi )\xi_\iota\, .$}
\smallskip

\noindent By the finiteness of $F_{\xi}$ there exists $n_o\geq 1$ such that 
$$
\{\iota_{n_o},\iota_{n_o+1},\dots\}\cap F_{\xi}=\emptyset\,.
$$

Now the sequence
\smallskip

\centerline{$\displaystyle {\eta}_k:=\sum_{n=1}^{k}\frac1{2^n}\xi_{\iota_n}
-\xi = \sum_{n=1}^{k}\frac1{2^n}\xi_{\iota_n}
-\sum_{n=1}^{+\infty}\frac1{2^n}\xi_{\iota_n}
=-\sum_{n=k+1}^{+\infty}\frac1{2^n}\xi_{\iota_n}$}
\smallskip

\noindent is clearly convergent to $0$ and by the continuity of
$\lambda_{\iota_{n_o}}$ we obtain
\begin{equation}\label{zerolimit}
\lim_{k\to\infty}\lambda_{\iota_{n_o}}({\eta}_k)=\lambda_{\iota_{n_o}}
\big(\lim_{k\to\infty} {\eta}_k\big)=\lambda_{\iota_{n_o}}(0)=0\, .
\end{equation}
But for $k>n_o$ we have
\smallskip

\centerline{$\displaystyle \lambda_{\iota_{n_o}}({\eta}_k)=
\sum_{n=1}^{k}\frac1{2^n}\lambda_{\iota_{n_o}}(\xi_{\iota_n}) -
\lambda_{\iota_{n_o}}(\xi ) =\frac1{2^{n_o}}$}
\smallskip

\noindent because $\iota_{n_o}\notin F_{\xi}\,$. Consequently
\medskip

\centerline{$\displaystyle \lim_{k\to\infty}\lambda_{\iota_{n_o}}({\eta}_k)=
\frac1{2^{n_o}}\, ,$}
\smallskip

\noindent in contradiction to (\ref{zerolimit}).

\hfill $\square\quad$
\medskip

It is immediate to argue that, for each $n\geq 1\,$, any
infinite-dimensional Hilbert space $H$ admits a Hamel basis with at least
$n$ continuous coordinate functions and infinitely many non continuous
coordinate functions. It is indeed enough to split  $H$ in the orthogonal sum
of an $n$-dimensional linear subspace and its orthogonal complement,
which is infinite dimensional, and apply Proposition \ref{uunnoo} to the latter.
\begin{proposition}
Let $H$ be an infinite dimensional Hilbert space. Then there exists an
unbounded linear or antilinear involution $($i.e. with square equal to the identity
operator on $H)\; C : H\longrightarrow H$.
\end{proposition}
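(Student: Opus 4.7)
The plan is to build $C$ explicitly as a "Householder-type" reflection using a discontinuous coordinate functional furnished by Proposition \ref{uunnoo}.

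First I would fix a Hamel basis $\big(\xi_\iota\big)_{\iota\in I}$ of $H$ consisting of unit vectors; by Proposition \ref{uunnoo} (and the remark following it) I may assume that infinitely many of the coordinate functionals $\lambda_\iota$ are discontinuous. Pick one index $\iota_o$ with $\lambda_{\iota_o}$ discontinuous, hence unbounded, so that there is a sequence $\big(\eta_n\big)_{n\geq 1}$ of unit vectors with $|\lambda_{\iota_o}(\eta_n)|\to +\infty$.

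For the linear case, I would define
\[
C\xi :=\xi -2\;\!\lambda_{\iota_o}(\xi)\;\!\xi_{\iota_o},\qquad \xi\in H.
\]
Linearity is clear. Using $\lambda_{\iota_o}(\xi_{\iota_o})=1$, a direct computation shows
$\lambda_{\iota_o}(C\xi) = \lambda_{\iota_o}(\xi)-2\lambda_{\iota_o}(\xi)=-\lambda_{\iota_o}(\xi)$,
and therefore $C^2\xi=C\xi-2\lambda_{\iota_o}(C\xi)\;\!\xi_{\iota_o} = \xi$. Unboundedness follows from $\|C\eta_n\|\geq 2|\lambda_{\iota_o}(\eta_n)|-\|\eta_n\|\to +\infty$ while $\|\eta_n\|=1$.

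For the antilinear case, I would first fix a bounded conjugation $J : H\longrightarrow H$ (for instance, coordinate-wise complex conjugation in an orthonormal basis of $H$) and, thanks to the freedom in choosing the Hamel basis, arrange that each $\xi_\iota$ is $J$-real, i.e. $J\xi_\iota=\xi_\iota$; this can be done by starting from a real-linear Hamel basis of the real Hilbert space $\{\xi\in H\, ;\, J\xi=\xi\}$, which is still an infinite-dimensional real Hilbert space to which Proposition \ref{uunnoo} applies. Under this assumption one has $\lambda_\iota(J\xi)=\overline{\lambda_\iota(\xi)}$ for all $\iota$, and a short computation gives $CJ=JC$ with $C$ as above. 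Then $C':=JC$ is antilinear, $(C')^2=JCJC=J^2C^2=I$, and $C'$ is unbounded because $J$ is bounded and isometric while $C$ is unbounded.

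The main point to be careful about is the construction of a Hamel basis consisting of $J$-real vectors that still has infinitely many discontinuous coordinate functionals; once this compatibility between $J$ and $(\xi_\iota)_{\iota\in I}$ is secured, the involution identities and the unboundedness statement reduce to the one-line computations above. Everything else is just the algebraic verification that the rank-one perturbation of the identity by $-2\lambda_{\iota_o}(\;\!\cdot\;\!)\;\!\xi_{\iota_o}$ squares to the identity.
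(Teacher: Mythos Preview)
Your proof is correct. The linear case is actually a bit cleaner than the paper's: the paper uses a Hamel basis with \emph{one continuous} coordinate functional $\lambda_{\iota_1}$ and one discontinuous one $\lambda_{\iota_2}$, defines $C_1$ by swapping these two coordinates, and argues unboundedness \emph{indirectly} (if $C_1$ were bounded, the functional $\lambda_{\iota_1}-\lambda_{\iota_2}$ would be continuous). Your reflection $C\xi=\xi-2\lambda_{\iota_o}(\xi)\xi_{\iota_o}$ needs only a single discontinuous functional and yields unboundedness directly from $\|C\eta_n\|\to\infty$; this is a genuine simplification.

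For the antilinear case the two arguments diverge more. The paper defines the Hamel-basis conjugation $C_2\xi=\sum_\iota\overline{\lambda_\iota(\xi)}\,\xi_\iota$ and runs a dichotomy: either $C_2$ is already unbounded, or it is bounded and then $C_1C_2$ (which equals $C_2C_1$) does the job. You instead fix a bounded ONB-conjugation $J$ in advance and adapt the Hamel basis to consist of $J$-real vectors, so that $JC=CJ$ automatically and $JC$ is the desired antilinear unbounded involution. Your route trades the dichotomy for the extra setup of a $J$-compatible Hamel basis; note that once you have such a basis you can invoke Proposition~\ref{uunnoo} directly for the complex space $H$ (no need to pass to the real subspace $H^J$), since the proposition applies to any Hamel basis of $H$ and guarantees infinitely many discontinuous coordinate functionals among the $\lambda_\iota$.
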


\proof
As explained below, there exists a Hamel basis $\big(\xi_\iota\big)_{\iota\in I}$
of $H$ admitting a continuous coordinate function $\lambda_{\iota_1}$, and a
non continuous one $\lambda_{\iota_2}$, for some $\iota_1 ,\iota_2$ in $I$.

Define the linear operator $C_1 : H\longrightarrow H$ by
\begin{equation}
\label{ciouno}
C_1\;\!\xi :=\sum_{\{\iota\in I\mid\iota\neq \iota_1 ,\iota_2\}}
\lambda_{\iota}(\xi )\xi_\iota+
\lambda_{\iota_2}(\xi )\xi_{\iota_1}+\lambda_{\iota_1}(\xi )\xi_{\iota_2}\, .
\end{equation}
If $C_1$ were continuous, then the linear functional defined by
$$
f(\xi ):=\frac{\big(\;\! \xi -C_1\;\!\xi\;\! \big|\;\!\xi_{\iota_1}-\xi_{\iota_2}\big)}{
\|\;\!\xi_{\iota_1}-\xi_{\iota_2}\|^2} =\lambda_{\iota_1}(x)-\lambda_{\iota_2}(x)
$$
would also be continuous, but it is not. Thus $C_1$ is the linear involution
we were searching for.

Concerning the antilinear case, either the antilinear operator
$C_2 : H\longrightarrow H$ defined by
\begin{equation}
\label{ciodue}
C_2\;\!\xi:=\sum_{\iota\in I}\,\overline{\lambda_{\iota}(\xi )}\xi_\iota
\end{equation}
is not continuous and we are done, or it is continuous. In the latter, the
operator
\medskip

\centerline{$C:=C_1C_2\, ,$}
\smallskip

\noindent where $C_1$ is the non continuous linear involution given
by \eqref{ciouno}, while $C_2$ is the continuous antilinear involution given
by \eqref{ciodue}, is antilinear and non continuous. It is also an involution
as $C_1C_2=C_2C_1$.

\hfill $\square\quad$

\bigskip

\end{document}